\newtheorem{thm'}{Theorem} 
\theoremstyle{definition}
\newtheorem*{conv*}{Indexing Convention}
\newtheorem{thm}{Theorem}[section]
\theoremstyle{definition}
\newtheorem{defn}[thm]{Definition}
\theoremstyle{definition}
\newtheorem{rmk}[thm]{Remark}
\theoremstyle{definition}
\newtheorem{ex}[thm]{Example}
\newtheorem{lem}[thm]{Lemma}
\newtheorem{prop}[thm]{Proposition}
\newtheorem{cor}[thm]{Corollary}
\newtheoremstyle{case}{}{}{}{}{\itshape}{\textup{:}}{ }{}
\theoremstyle{case}
\newtheorem{case}{Case}
\newcommand{\C}{\mathcal{C}}
\newcommand{\D}{\mathcal{D}}
\renewcommand{\H}{\mathcal{H}}
\renewcommand{\L}{\mathcal{L}}
\newcommand{\CC}{\mathbb{C}}
\newcommand{\PP}{\mathbb{P}}
\newcommand{\RR}{\mathbb{R}}
\newcommand{\QQ}{\mathbb{Q}}
\newcommand{\ZZ}{\mathbb{Z}}
\numberwithin{equation}{section}
\title[A Topological Characterization of the Intersection Complex]{A Topological Characterization of the Middle Perversity Intersection Complex for Arbitrary Complex Algebraic Varieties}
\author{Ben Wu}
\begin{document}

\begin{abstract}
For an arbitrary complex algebraic variety which is not necessarily pure dimensional, the intersection complex can be defined as the direct sum of the Deligne-Goresky-MacPherson intersection complexes of each irreducible component. We give two axiomatic topological characterizations of the middle perversity direct sum intersection complex, one stratification dependent and the other stratification independent. To accomplish this, we show that this direct sum intersection complex can be constructed using Deligne's construction in the more general context of topologically stratified spaces. A consequence of these characterizations is the invariance of this direct sum intersection complex under homeomorphisms. 
\end{abstract}

\maketitle

\tableofcontents

\section{Introduction}
In \cite{GM1}, Goresky and MacPherson introduce the intersection (co)homology groups for a topological pseudomanifold. In \cite{GM2}, Goresky and MacPherson construct a complex of sheaves whose (hyper)cohomology gives the intersection homology groups. This complex of sheaves is called the \textit{(Deligne-Goresky-MacPherson) intersection complex} and the construction is referred to as \textit{Deligne's construction} (the indexing convention used for intersection complexes is discussed at the end of the introduction). They show that the intersection complex is uniquely characterized (up to canonical isomorphism) by certain axioms. A consequence of this characterization is that the intersection complex, and hence the intersection homology, is invariant under homeomorphisms. Irreducible, or even pure dimensional, (complex algebraic) varieties can be viewed as topological pseudomanifolds and intersection homology is a useful tool for understanding their topology; see \cite{dCL}. Arbitrary varieties, however, cannot be viewed as topological pseudomanifolds because their irreducible components may have differing dimensions. Instead, they must be viewed as topologically stratified spaces; see $\S \ref{top strat space}$.

For arbitrary varieties, there is still a natural candidate for the intersection complex. In \cite{dC1}, de Cataldo defines the middle perversity intersection complex of a variety as a direct sum of the middle perversity Deligne-Goresky-MacPherson intersection complexes of each irreducible component. He then observes that this complex satisfies virtually all of the properties of the usual intersection complex for irreducible varieties, e.g. Poincar\'e duality, existence of mixed and pure Hodge structures, Lefschetz theorems, etc. In \cite{dCM}, de Cataldo and Maulik prove the homeomorphism invariance of the intersection complex as a lemma and use it to prove that the perverse Leray filtration for the Hitchin morphism is independent of the complex structure of the curve. An axiomatic characterization of the intersection complex, analogous to the one given by Goresky and MacPherson for pseudomanifolds, is desirable because it gives a topological criterion for determining which complexes can be the intersection complexes. Example \ref{ax 2 fails} in $\S 4$ shows that although each summand of the intersection complex is characterized by the axioms proposed by Goresky and MacPherson, it is not so clear which axioms characterize the direct sum.

The main goal of this paper is to give an axiomatic topological characterization of the middle perversity intersection complex of an arbitrary complex algebraic variety which is not necessarily pure dimensional. For those wondering why we only consider the middle perversity, see Remark \ref{arb perv rmk}. Although we will only work with complex algebraic varieties in this paper, our results hold for any topologically stratified space with only even dimensional strata (see Remark \ref{alg var vs strat space}). In particular, they also hold for complex analytic spaces. We summarize our approach below.

Let $X$ be a complex algebraic variety of complex dimension $n$, with stratification 
$$\mathfrak{X} : X= X_n \supseteq X_{n-1} \supseteq \cdots \supseteq X_{-1}= \varnothing$$
by closed subvarieties, so that all strata contained in $X_k - X_{k-1}$ are of pure complex dimension $k$ and $X$ (in the classical topology) has the structure of a topologically stratified space (e.g. $\mathfrak{X}$ induced by a Whitney stratification).  In section \ref{construction}, we show that the stratification induces an open dense subset $U \subseteq X$ such that each 
\begin{enumerate}
\item each point of $U$ admits a neighborhood homeomorphic to $\CC^m$ for some $1 \leq m \leq n$, i.e. $U = \bigsqcup_{m=1}^n U^m$ where $U^m$ is a topological manifold of complex dimension $m$,
\item $\overline{U^m}-U^m$ has complex dimension $\leq m-1$, where $\overline{U^m}$ is the closure of $U^m$ in $X$.
\end{enumerate}
We then construct a complex $IC(\mathfrak{X},\mathcal{L})$ of sheaves on $X$ using Deligne's construction with respect to: the (lower) middle perversity, the stratification $\mathfrak{X}$, and any local system $\mathcal{L}$ on the induced open dense subset $U \subseteq X$. Proposition \ref{IC is direct sum} shows that we can interpret this complex as a direct sum of Deligne-Goresky-MacPherson intersection complexes. In particular, for possibly reducible varieties, the complex $IC(\mathfrak{X},\mathcal{L})$ is the intersection complex defined by de Cataldo in \cite{dC1}. When the variety is pure dimensional, Deligne's construction begins by using the stratification to induce a filtration by open sets. A key ingredient in our construction of $IC(\mathfrak{X},\mathcal{L})$ is a new way of using the stratification to induce a filtration of a not necessarily pure dimensional complex algebraic variety by open sets.  Example \ref{open filt ex} shows that this procedure is more subtle than one might initially expect.  A priori the complex $IC(\mathfrak{X},\mathcal{L})$ depends on the stratification $\mathfrak{X}$ and the local system $\mathcal{L}$.  Our main result is the following:

\begin{thm'}[\S \ref{top indep}]\label{main thm 1}
Let $X$ be a complex algebraic variety of complex dimension $n$ which is not necessarily pure dimensional. Let $U$ be an open dense subset of $X$ satisfying $(1)$ and $(2)$ above. 
Let $\L^m$ be a local system on $U^m$ and set $\L = \bigoplus_{m=1}^n \L^m$ (extend each $\L^m$ on $U^m$ to $U$ by zero). Then there exists a unique (up to canonical isomorphism) complex $IC(X, \mathcal{L})$ satisfying: 
\begin{enumerate}[(a)]
\item  (Normalization) There exists an open dense subset $V$ of $X$ such that $V = \bigsqcup_{m=1}^n V^m$ where $V^m$ is a topological manifold of complex dimension $m$, dim$_\CC(\overline{V^m} - V^m) \leq m-1$, and $IC(X,\L)|_{V^m} \simeq \L'^m[m]$ where $\L'$ is the unique extension of $\L^m|_{U^m \cap V^m}$ to $V^m$ (see Remark \ref{local sys rmk} for more details on $\L'$).
\item (Pure Dimensional Support) For $1 \leq m \leq n$, if $a > -m$, $$\text{dim}_\CC\{x \in \overline{V^m} \ | \ \H^a(i_x^*S) \neq 0\} < -a.$$ 
\item (Pure Dimensional Cosupport) For $1 \leq m \leq n$, if $a < m$, $$\text{dim}_\CC\{x \in \overline{V^m} \ | \ \H^a(i_x^!S) \neq 0\} < a.$$ 
\end{enumerate}
where $i_x: \{x\} \to X$ is the inclusion. 

In particular, the complex $IC(\mathfrak{X},\mathcal{L})$ is independent of the stratification and the complex $IC(X,\mathcal{L})$ is invariant under homeomorphisms.
\end{thm'}

\setcounter{thm'}{0}

This theorem gives another proof of the homeomorphism invariance of the intersection complex proved by de Cataldo and Maulik in \cite{dCM} for possibly reducible varieties. We prove our main theorem by giving two characterizations of the complex $IC(\mathfrak{X},\mathcal{L})$ in Section \ref{axiomatic char}. These characterizations are analogous to the stratification dependent characterization, [AX1], and the stratification independent characterization, [AX2], of the intersection complex of a topological pseudomanifold given by Goresky and MacPherson in \cite{GM2}. To emphasize the analogy with the axioms proposed by Goresky and MacPherson, we will denote our sets of axioms by [AX1$'$] and [AX2$'$]. More precisely, we give a stratification dependent collection of axioms, [AX1$'$], and prove that $IC(\mathfrak{X},\mathcal{L})$ is the unique complex (up to canonical isomorphism) satisfying axioms [AX1$'$]; see Definition \ref{AX$1'$} and Theorem \ref{main thm 2}. We discuss the differences between axioms [AX1$'$] and axioms [AX1] in Remark \ref{ax1' vs ax1}. We then give a stratification independent collection of axioms, [AX2$'$], and prove that axioms [AX2$'$] are equivalent to axioms [AX1$'$]; see Definition \ref{AX$2'$} and Proposition \ref{main prop}. We discuss the differences between axioms [AX2$'$] and axioms [AX2] in Remark \ref{ax2' vs ax2}. In Section \ref{top indep}, we finish the proof by giving a way to compare objects in $D^b_c(X)$ with respect to two different stratifications which may not have a common refinement.

\begin{conv*}\label{indexing conv}
Let $X$ be a complex algebraic variety of pure complex dimension $n$. Let $U \subseteq X$ be an open dense subset which is a topological manifold of dimension $n$ and let $\L$ be a local system on $U$. We require that the middle perversity Deligne-Goresky-MacPherson intersection complex $IC(X,\L)$ satisfies $IC(X, \L)|_U \simeq \L[n]$. With this convention, $IC(X,\L)$ is perverse.\\
\indent Let $X$ be a complex algebraic variety of complex dimension $n$ which is not necessarily pure dimensional. Let $U$ be an open dense subset of $X$ such that $U = \bigsqcup_{m=1}^n U^m$ where each $U^m$ is a topological manifold of dimension $n$ and dim$_\CC(\overline{U^m}-U^m) \leq m-1$. Let $X^m = \overline{U^m}$ be the closure of $U^m$ in $X$. In Corollary \ref{irred comp cor}, we show that $X^m$ can be interpreted as the union of all irreducible $m$-dimensional components of $X$. Let $\L^m$ be a local system on $U^m$ and set $\L = \bigoplus_{m=1}^n \L^m$ where each $\L^m$ is extended to $U$ by zero. The intersection complex of $X$ is defined to be $IC(X, \L) = \bigoplus_{m=1}^n IC(X^m,\L^m)$ where each $IC(X^m,\L^m)$ is normalized as above. In particular, $IC(X, \L)|_U \simeq \bigoplus_{m=1}^n \L^m[m]$.
Although this indexing convention seems more cumbersome than the Borel convention where the local systems are not shifted, it is more convenient to use when construct the complex $IC(\mathfrak{X},\L)$ (see Remark \ref{indexing conv rmk} for a more detailed discussion).
\end{conv*}

\subsection{Acknowledgements}
I would like to thank my advisor Mark de Cataldo for suggesting this problem and for the many useful discussions. I would also like to thank J\"org Sch\"urmann, Michael Albanese and Lisa Marquand for their comments and suggestions. I would finally like to thank the anonymous referee whose comments and suggestions helped improved this paper.

\section{Preliminaries}
We begin by fixing some terminology and notation. Given a set $A$ and a subset $B \subseteq A$, we denote by $B^c$ the set complement of $B$. The word \textit{variety} means a separated scheme of finite type over the complex numbers $\CC$. We endow varieties with the classical topology. In this case, Whitney showed that varieties admit the structure of Whitney stratified spaces \cite{W}. Verdier then showed that there exists a Whitney stratification such that each strata is complex algebraic \cite{V}. Finally, Teissier showed that varieties admit a canonical Whitney stratification for which the strata are algebraic \cite{Te}. We work with a fixed regular Noetherian ring $R$ with finite Krull dimension. We shall mainly be concerned with the cases that $R = \ZZ, \QQ$, or $\CC$. The word \textit{sheaf} means a sheaf of $R$-modules. The constant sheaf on a topological space $X$ is denoted by $R_X$. The word \textit{complex} means a complex of sheaves of $R$-modules. Let $Sh(X)$ denote the abelian category of sheaves on $X$, and $D^b(X)$ denote the bounded derived category of the abelian category $Sh(X)$.

\subsection{Topologically Stratified Spaces} \label{top strat space}
We begin by recalling the basic definitions associated with topologically stratified spaces given in \cite{GM2}. A more detailed discussion can be found in \cite[Ch. 2]{F}.
\begin{defn}\label{top strat space def}
The definition of a topological stratified space is inductive. A $0$-dimensional topologically stratified Hausdorff space is a countable collection of points with the discrete topology.  An $n$-dimensional \textit{topological stratification} of a paracompact Hausdorff space $X$ is a finite filtration $\mathfrak{X}$ by closed subsets 
\begin{equation} \label{strat eq}
\mathfrak{X}: X = X_n \supseteq X_{n-1} \supseteq \cdots \supseteq X_0 \supseteq X_{-1}=\varnothing
\end{equation}
such that for each point $p \in X_k - X_{k-1}$, there exists a neighborhood $N$ of $p$, a compact Hausdorff space $L$ with an $(n-k-1)$-dimensional topological stratification 
\begin{equation}
L = L_{n-k-1} \supseteq \cdots \supseteq L_0 \supseteq L_{-1}= \varnothing,
\end{equation}
and a homeomorphism
\begin{equation} \label{local struc eq}
\phi : \RR^k \times \text{cone}^o(L) \to N
\end{equation}
which takes each $\RR^k \times \text{cone}^o(L_j)$ homeomorphically to $N \cap X_{k+j+1}$. Here, cone$^o(L)$ denotes the open cone $L \times [0,1) / \sim$ where $(l,0) \sim (l',0)$ for all $l,l' \in L$. We use the convention that cone$^o(\varnothing)$ is a point. 
We often refer to $N$ as a \textit{distinguished neighborhood} and $\mathfrak{X}$ as a stratification. In Remark \ref{projection rmk}, we emphasize some important structure of distinguished neighborhoods. To maintain simplicity in our formulas later on, we will make the assumption that stratified spaces do not contain any open $0$-dimensional strata, i.e. isolated points.
\end{defn}

If $X_k - X_{k-1}$ is nonempty, then for any $p \in X_k - X_{k-1}$, any distinguished neighborhood $N$ gives a homeomorphism $N \cap X_{k} \simeq \RR^k \times \text{cone}^o(L_{-1}) \simeq \RR^k$. By shrinking $N$ we can take $N \subseteq X_{k-1}^c$. Thus, if $X_k- X_{k-1}$ is nonempty, it is a $k$-dimensional topological manifold. The connected components of $X_k- X_{k-1}$ are called the \textit{$k$-dimensional strata} of $X$.

A consequence of the definition is that stratified spaces satisfy the \textit{axiom of the frontier}, i.e. the closure of any stratum is a union of lower dimensional strata. We refer the reader to \cite[\S 2.2-\S 2.3]{F} for proofs. 

\begin{rmk}\label{projection rmk}
Let $X$ be a stratified space with stratification $\mathfrak{X}$ and $N \simeq \RR^k \times cone^o(L)$ be a distinguished neighborhood of $x \in X_k-X_{k-1}$. Let $\pi : N \to cone^o(L)$ denote the natural projection map. There is a natural stratification on $\RR^k \times cone^o(L)$ given by setting $(\RR^k \times cone^o(L))_j \coloneqq \RR^k \times cone^o(L_j)$. Since $\RR^k \times \text{cone}^o(L_j)$ homeomorphic to $N \cap X_{k+j+1}$, the natural stratification on $\RR^k \times cone^o(L)$ is the same as the stratification on $N$ induced by $\mathfrak{X}$. In particular, if $S$ is a stratum of $X$, then $S \cap N$ is a union of strata of the form $\RR^k \times cone^o(T)$ where $T$ is a stratum of $L$. It follows that $\pi^{-1}(\pi(S \cap N)) = S \cap N$.
\end{rmk}

\begin{rmk} \label{even dim strat rmk}
A Whitney stratification on a complex algebraic variety $X$ induces a topological stratification. Thus, we can view $X$ as a topologically stratified space with only even dimensional strata. We will denote this stratification by
\begin{equation*}
\mathfrak{X} : X = X_n \supseteq X_{n-1} \supseteq \cdots \supseteq X_0 \supseteq X_{-1} = \varnothing
\end{equation*}
where $X_k - X_{k-1}$ consists of complex $k$-dimensional strata. The strata can be taken to be complex algebraic, but we will not need this fact. A stratification of a complex algebraic variety will always mean stratification in the above sense. 
\end{rmk}

\begin{defn}
A topologically stratified space $X$ is \textit{purely $n$-dimensional} if $X_n-X_{n-1}$ is dense in $X$. A topologically stratified space is purely $n$-dimensional if and only if every open set has topological dimension $n$ in the sense of Hurewicz and Wallman described in \cite{HW}. An \textit{$n$-dimensional topological pseudomanifold} is a purely $n$-dimensional topologically stratified space which admits a stratification $\mathfrak{X}$ such that $X_{n-1} = X_{n-2}$. 
\end{defn}

\begin{defn}
Let $X$ and $Y$ be stratified spaces. A continuous map $f: X \to Y$ is \textit{stratified} if
\begin{enumerate}
\item $f$ is \textit{stratum preserving}, i.e. for any stratum $S$ of $Y_k - Y_{k-1}$, $f^{-1}(S)$ is a union of strata of $X$. 
\item for each $p \in Y_k - Y_{k-1}$, there exists a neighborhood $N$ of $p$ in $Y_k$, a topologically stratified space 
$$F = F_k \supseteq F_{k-1} \supseteq \cdots \supseteq F_{-1} = \varnothing$$
and a strata preserving homeomorphism $F \times N \to f^{-1}(N)$ which commutes with projection to $N$.
\end{enumerate}
\end{defn}

\subsection{The Constructible Derived Category} 

Let $X$ be a topologically stratified space. A sheaf $\mathcal{L}$ on $X$ is \textit{locally constant} if for each $x \in X$, there exists an open set $U \subseteq X$ and an $R$-module $M$ such that $\mathcal{L}|_U \simeq M_U$, where $M_U$ is the constant sheaf on $U$ associated with the $R$-module $M$. A locally constant sheaf $\mathcal{L}$ with finitely generated stalks is referred to as a \textit{local system}. A complex of sheaves $S$ is \textit{cohomologically locally constant} (CLC) if the associated cohomology sheaves are locally constant. Now, let $\mathfrak{X}$ be any filtration of $X$ by closed subsets, not necessarily a stratification. A complex of sheaves $S$ is \textit{cohomologically locally constant with respect to $\mathfrak{X}$} ($\mathfrak{X}$-clc) if for each $k$, $S|_{X_k - X_{k-1}}$ is CLC. A complex of sheaves $S$ is \textit{constructible with respect to $\mathfrak{X}$} ($\mathfrak{X}$-cc) if $S$ is $\mathfrak{X}$-clc and the stalks of the cohomology sheaves are finitely generated. A complex of sheaves $S$ is \textit{topologically constructible} if $S$ is bounded and $S$ is constructible with respect to some stratification of $X$. In this paper, the word \textit{constructible} means topologically constructible. Let $D^b_c(X)$ denote the full subcategory of $D^b(X)$ consisting of constructible complexes and $D^b_{\mathfrak{X}}(X)$ denote the full subcategory of $D^b(X)$ consisting of $\mathfrak{X}$-cc complexes. The standard $t$-structure on $D^b(X)$ induces a $t$-structure on $D^b_c(X)$. The truncation functors are denoted $\tau_{\leq i}: D^b_c(X) \to D^{b, \leq i}_c(X)$ and $\tau_{\geq i}: D^b_c(X) \to D^{b, \geq i}_c(X)$. 

Useful references for sheaf theory are \cite{I,KS}. A brief discussion of the constructible derived category can be found in \cite[\S 1.3-\S 1.15]{GM2}. For a more complete discussion, we refer the reader to \cite{B}. We will record some of the most useful facts below for convenience.

Let $X$, $Y$ be stratified spaces with stratifications $\mathfrak{X}$ and $\mathfrak{Y}$ respectively. Let $f:X \to Y$ be a stratified map with respect to these stratifications. We have the four functors
\begin{center}
\begin{tikzcd}
D^b_{\mathfrak{X}}(X) \arrow[r, bend left, "{Rf_*, Rf_!}"] &D^b_\mathfrak{Y}(Y) \arrow[l, bend left, "{f^*, f^!}"].
\end{tikzcd}
\end{center}

\begin{prop}\label{top man}
If $X$ is an oriented manifold and $i:Z \to X$ is the inclusion of a locally closed oriented submanifold of codimension $d$, we have that $i^!R_X \simeq i^*R_X[-d]$.
\begin{proof}
See \cite[p. 336]{I}. 
\end{proof}
\end{prop}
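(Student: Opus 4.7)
The plan is to reduce the general locally closed case to the closed embedding case, then compute $i^!R_X$ locally using a tubular neighborhood and the relation of $i^!$ to local cohomology with support.

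First I would factor $i$ as $Z \xhookrightarrow{j} U \xhookrightarrow{k} X$, where $U$ is an open neighborhood of $Z$ in $X$ in which $Z$ is closed (such $U$ exists because $i$ is a locally closed embedding). For the open inclusion $k$, one has $k^! = k^*$, and orientations and codimensions are preserved under $k$. Thus it suffices to prove the statement when $i$ is a closed embedding of an oriented submanifold of codimension $d$ into an oriented manifold $X$. The sheaf $i^*R_X$ is the constant sheaf $R_Z$, so the claim becomes $i^!R_X \simeq R_Z[-d]$.

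Next I would reduce to a local model. Both $i^!R_X$ and $R_Z[-d]$ are CLC on $Z$, so it suffices to identify their stalks at each $z \in Z$ together with compatible local trivializations. Choosing a tubular neighborhood $T$ of $Z$ in $X$ together with an orientation-preserving diffeomorphism $T \simeq Z \times \RR^d$ (using both orientations) allows me to replace $(X,Z)$ locally by $(\RR^n, \RR^{n-d})$ with the standard inclusion. By standard functoriality, the question then reduces to computing $i^!R_{\RR^n}$ for the standard inclusion $i: \RR^{n-d} \hookrightarrow \RR^n$.

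The main step is this local calculation. Using the identity $i^!F = i^*R\underline{\Gamma}_Z(F)$ for a closed embedding (where $R\underline{\Gamma}_Z$ denotes the derived sections with support in $Z$), I compute the stalks
\[
\H^k(i^!R_{\RR^n})_0 = H^k_{\RR^{n-d}}(\RR^n; R),
\]
and by excision together with the long exact sequence
\[
\cdots \to H^{k-1}(\RR^n\setminus\RR^{n-d};R) \to H^k_{\RR^{n-d}}(\RR^n;R) \to H^k(\RR^n;R) \to H^k(\RR^n\setminus\RR^{n-d};R) \to \cdots
\]
together with the homotopy equivalence $\RR^n\setminus\RR^{n-d}\simeq S^{d-1}$, this group is $R$ in degree $d$ and zero otherwise. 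Hence the complex $i^!R_{\RR^n}$ is quasi-isomorphic to $R_{\RR^{n-d}}[-d]$. The chosen orientations make the generator of $H^d_Z(X;R)$ canonical, which pins down the isomorphism globally and yields $i^!R_X \simeq R_Z[-d] \simeq i^*R_X[-d]$.

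The main obstacle I anticipate is the orientation bookkeeping: the isomorphism produced by the local Thom/excision calculation depends a priori on the trivialization of the normal bundle, and one must verify that the local isomorphisms glue because $Z\subseteq X$ is oriented (equivalently, the normal bundle is oriented). Everything else is routine six-functor formalism combined with the computation above.
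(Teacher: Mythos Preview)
The paper does not give its own argument for this proposition; it simply cites Iversen \cite[p.~336]{I}. Your proposal supplies a correct and standard proof, namely the relative Thom isomorphism computed stalkwise via local cohomology with supports, so there is nothing substantive to compare against.

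One small imprecision: a tubular neighborhood $T$ of $Z$ is in general diffeomorphic to the total space of the normal bundle $NZ$, not to the product $Z\times\RR^d$; the product description holds only locally over a chart of $Z$. Since you only use it to reduce the stalk computation to the pair $(\RR^n,\RR^{n-d})$, this does not affect the argument, and your remark about orientations is exactly what is needed to glue the local Thom classes into a global isomorphism. If you want an argument that sidesteps tubular neighborhoods (useful in the purely topological category, which is the setting of this paper), you can instead invoke Verdier duality: for an $R$-oriented $n$-manifold one has $\omega_X\simeq R_X[n]$, and $i^!\omega_X\simeq\omega_Z$ together with the orientation of $Z$ gives $i^!R_X[n]\simeq R_Z[n-d]$, hence $i^!R_X\simeq i^*R_X[-d]$.
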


There are adjunctions ($f^*,Rf_*$) and  ($Rf_!, f^!$). There is a morphism of functors $Rf_! \to Rf_*$ which is an isomorphism if $f$ is proper. For an open set $U \subseteq X$ and $Z = X - U$ its closed complement, we have inclusions 
$$U \xrightarrow{j}  X  \xleftarrow{i} Z. $$
Since $Z$ is closed, $Ri_! = i_!$. This gives rise to the \textit{adjunction distinguished triangles}
\begin{gather*}
i_!i^! \to id \to Rj_*j^* \xrightarrow{[1]},\\
Rj_!j^! \to id \to i_*i^* \xrightarrow{[1]}.
\end{gather*}

\begin{lem} \label{projection commute}
Let $M$ be a locally contractible topological space and $\pi : X' = X \times M \to X$ be the projection. Let $Y \subseteq X$ and $Y' = \pi^{-1}(Y)$. We have a cartesian diagram
\begin{center}
\begin{tikzcd}
Y' \arrow[r,"i'"] \arrow[d,"\pi'"] &X' \arrow[d, "\pi"]\\
Y \arrow[r,"i"] &X
\end{tikzcd}
\end{center}
\begin{enumerate}[(a)]
\item If $Y \subseteq X$ is open and $S \in D^b_c(Y)$, then 
\begin{equation*}
Ri'_*\pi'^{*}S \simeq \pi^*Ri_*S.
\end{equation*}
\item If $Y \subseteq X$ is closed and $T \in D^b_c(X)$, then
\begin{equation*}
\pi'^*i^!T \simeq i'^!\pi^*T.
\end{equation*}
\end{enumerate}
\begin{proof}
See \cite[V, 3.13]{B}. 
\end{proof}
\end{lem}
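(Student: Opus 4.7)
The plan is to deduce both parts from the general base change formalism, using the local contractibility of $M$ to verify the relevant isomorphisms on stalks.

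For part (a), I would start with the natural base change morphism $\pi^* Ri_* S \to Ri'_* \pi'^* S$ obtained via the $(\pi^*, R\pi_*)$-adjunction applied to $Ri_*S$, and check that it is an isomorphism stalkwise. At a point $(x,m) \in X'$, the stalk of the left-hand side is $(Ri_*S)_x$, while the stalk of the right-hand side is a colimit of $\mathbb{H}^*\bigl(V'\cap Y', \pi'^*S\bigr)$ over open neighborhoods $V'$ of $(x,m)$ in $X'$. Local contractibility of $M$ provides a cofinal system of neighborhoods of the form $V'=V\times W$ with $V$ open in $X$ and $W$ an open contractible neighborhood of $m$. For such a product neighborhood, $V'\cap Y' = (V\cap Y)\times W$, and the restriction of $\pi'$ is the projection $(V\cap Y)\times W \to V\cap Y$ with contractible fiber $W$; consequently the pullback induces an isomorphism $\mathbb{H}^*(V\cap Y, S) \xrightarrow{\sim} \mathbb{H}^*((V\cap Y)\times W, \pi'^*S)$. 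Taking the colimit in $V$ then recovers $(Ri_*S)_x$.

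For part (b), I would reduce to (a) via the adjunction distinguished triangles. Let $j: U = X - Y \hookrightarrow X$ be the open complement of $Y$ and $j': U' \hookrightarrow X'$ its pullback under $\pi$. Applying $\pi^*$ to the adjunction triangle
$$i_!i^!T \to T \to Rj_*j^*T \xrightarrow{+1}$$
and using part (a) to identify $\pi^*Rj_*j^*T$ with $Rj'_*j'^*\pi^*T$, together with proper base change for the closed immersion $i$ (which rewrites $\pi^*i_!i^!T$ as $i'_!\pi'^*i^!T$), I obtain a distinguished triangle
$$i'_!\pi'^*i^!T \to \pi^*T \to Rj'_*j'^*\pi^*T \xrightarrow{+1}.$$
Comparing this with the adjunction triangle $i'_!i'^!\pi^*T \to \pi^*T \to Rj'_*j'^*\pi^*T \xrightarrow{+1}$ on $X'$, the base change transformations fit into a morphism of triangles whose second and third vertical arrows are the isomorphisms just produced; by the triangulated category axioms the first vertex is then also an isomorphism, giving $i'_!\pi'^*i^!T \simeq i'_!i'^!\pi^*T$. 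Applying $i'^*$ and using $i'^*i'_! \simeq \mathrm{id}$ yields the desired isomorphism $\pi'^*i^!T \simeq i'^!\pi^*T$.

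The main technical obstacle is the stalk computation in (a): one must carefully justify cofinality of product neighborhoods $V\times W$ with $W$ contractible, and then the cohomological invariance of pullback along a projection with contractible fibers (for a constructible coefficient). Both are standard consequences of the hypothesis that $M$ is locally contractible, but they are where the hypothesis is genuinely used. Once (a) is in place, (b) is a formal consequence via the adjunction triangle argument above.
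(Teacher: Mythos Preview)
Your proposal is correct. The paper itself does not give an argument: its entire proof is the single line ``See \cite[V, 3.13]{B}'', deferring to Borel's book. So there is no approach to compare yours against; you have supplied a genuine proof where the paper only cites one.

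Your argument is the standard one, and it is essentially what one finds in Borel. For (a), the stalkwise verification using product neighborhoods $V\times W$ with $W$ contractible is exactly right; the key input is that for the projection $p:A\times W\to A$ with $W$ contractible one has $Rp_*p^*S\simeq S$, which you correctly identify as the place where local contractibility enters. (A small remark: constructibility of $S$ is not really needed here---the isomorphism $Rp_*p^*S\simeq S$ holds for bounded $S$ via the projection formula and $Rp_*R_{A\times W}\simeq R_A$---so your comment that this is ``where the hypothesis is genuinely used'' slightly overstates its role. The hypothesis on $M$ is essential; the one on $S$ is not.) For (b), reducing to (a) via the adjunction triangle and proper base change for the closed immersion is clean and correct; the morphism-of-triangles argument, combined with full faithfulness of $i'_!$, yields that the natural base change transformation $\pi'^*i^!\to i'^!\pi^*$ is an isomorphism, not merely that the two sides are abstractly isomorphic.
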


We end with the following important proposition.

\begin{prop} \label{morph lifting prop}
Suppose $A, B, C$ are objects in $\D^b_c(X)$ and $\H^a(A) = 0$ for $a \geq k+1$. Let $\psi : B \to C$ be a morphism such that the induced maps on cohomology $\H^a(B) \to \H^a(C)$ are isomorphisms for all $a \leq k$. Then the map induced by $\psi$
$$\text{Hom}_{D^b_c(X)}(A, B) \to \text{Hom}_{D^b_c(X)}(A, C)$$
is an isomorphism. 
\begin{proof}
See \cite[\S 1.15]{GM2}.
\end{proof}
\end{prop}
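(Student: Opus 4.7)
The plan is to reduce everything to truncated objects and exploit the orthogonality built into the standard $t$-structure on $D^b_c(X)$. The hypothesis on $A$ says precisely that $A \in D^{b,\leq k}_c(X)$, so $A \simeq \tau_{\leq k}A$ canonically. The hypothesis on $\psi$ says that the induced morphism $\tau_{\leq k}(\psi) : \tau_{\leq k}B \to \tau_{\leq k}C$ is a quasi-isomorphism: in degrees $a \leq k$ it agrees with $\H^a(\psi)$, which is an isomorphism by assumption, and in degrees $a > k$ both sides vanish. Hence $\tau_{\leq k}(\psi)$ is an isomorphism in $D^b_c(X)$, and in particular it induces a bijection
$$\text{Hom}_{D^b_c(X)}(A, \tau_{\leq k}B) \xrightarrow{\sim} \text{Hom}_{D^b_c(X)}(A, \tau_{\leq k}C).$$

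Next I would show that for any object $M \in D^b_c(X)$, the natural truncation morphism $\tau_{\leq k}M \to M$ induces an isomorphism
$$\text{Hom}_{D^b_c(X)}(A, \tau_{\leq k}M) \xrightarrow{\sim} \text{Hom}_{D^b_c(X)}(A, M).$$
This comes from applying $\text{Hom}_{D^b_c(X)}(A, -)$ to the canonical distinguished triangle $\tau_{\leq k}M \to M \to \tau_{\geq k+1}M \xrightarrow{[1]}$ and observing that, since $A \in D^{b,\leq k}_c(X)$ and both $\tau_{\geq k+1}M$ and $\tau_{\geq k+1}M[-1]$ lie in $D^{b,\geq k+1}_c(X)$, the defining orthogonality of the $t$-structure forces the outer Hom-groups in the resulting long exact sequence to vanish.

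Applying this to $M = B$ and $M = C$ and combining with the naturality of truncation gives a commutative square
$$\begin{array}{ccc}
\text{Hom}_{D^b_c(X)}(A, \tau_{\leq k}B) & \longrightarrow & \text{Hom}_{D^b_c(X)}(A, \tau_{\leq k}C) \\
\downarrow\wr & & \downarrow\wr \\
\text{Hom}_{D^b_c(X)}(A, B) & \longrightarrow & \text{Hom}_{D^b_c(X)}(A, C)
\end{array}$$
in which the vertical arrows are the isomorphisms just established, and the top horizontal arrow is the isomorphism induced by $\tau_{\leq k}(\psi)$. Thus the bottom arrow, which is $\text{Hom}(A, \psi)$, is also an isomorphism, as desired.

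The only real obstacle is making precise the two orthogonality-type vanishings ($\text{Hom}(A, \tau_{\geq k+1}M)$ and $\text{Hom}(A, \tau_{\geq k+1}M[-1])$), but both follow immediately from the axiom $\text{Hom}(D^{\leq k}, D^{\geq k+1}) = 0$ for a $t$-structure, which holds for the standard $t$-structure on $D^b_c(X)$; everything else is formal bookkeeping with truncation functors.
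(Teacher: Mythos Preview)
Your argument is correct. The paper itself does not prove this proposition but simply cites \cite[\S 1.15]{GM2}, so there is no in-paper proof to compare against; your use of the truncation triangle together with the $t$-structure orthogonality $\text{Hom}(D^{\leq k}, D^{\geq k+1}) = 0$ is exactly the standard way to establish this fact and is essentially what lies behind the cited reference.
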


\section{Deligne's Construction for Complex Algebraic Varieties}\label{construction}

We briefly recall Deligne's construction when the complex algebraic variety $X$ has pure complex dimension $n$ with stratification $\mathfrak{X}$ by closed subvarieties. The stratification induces a filtration by open subsets
\begin{equation*}
U_1 \subseteq U_2 \subseteq \cdots \subseteq U_{n+1} = X
\end{equation*}
where $U_k = X-X_{n-k}$. Since $X$ is pure dimensional, $U_1$ is dense in $X$. Let $j_k :U_k \to U_{k+1}$ denote the inclusion maps. Define a complex recursively as follows: if $\L$ is a local system on the open dense union of strata $U_1$, then set 
\begin{equation*}
I_1 = \L[n]
\end{equation*}
\begin{equation*}
I_{k+1} = \tau_{\leq k-1-n} Rj_{k*}I_k
\end{equation*}
Note that here we are using the middle perversity and the indexing convention described at the end of the introduction.

We see that in the pure dimensional case, the starting point for Deligne's construction of the intersection complex is a local system on an open dense union of strata, shifted by the complex dimension of that open dense set. When the variety is not necessarily pure dimensional, the starting point for Deligne's construction will still be a local system on a open dense union of strata. However, the notion of shifting by dimension becomes more complicated. This is because an open dense set in the variety may consist of many components of different dimensions. Given a local system on an open dense set, restriction gives local systems on each component of fixed dimension. We can then shift each restricted local system by the dimension of the component that it is supported on. We make this more precise below.

In what follows, let $X$ be a complex algebraic variety of complex dimension $n$, with stratification 
$$\mathfrak{X} : X= X_n \supseteq X_{n-1} \supseteq \cdots \supseteq X_{-1}= \varnothing$$
so that all strata contained in $X_k - X_{k-1}$ are of pure complex dimension $k$ and $X$ has the structure of a topologically stratified space (e.g. $\mathfrak{X}$ is induced by a Whitney stratification). Let $p$ denote the middle perversity. Unless otherwise stated, the word dimension is taken to mean complex dimension. 

\begin{rmk}\label{alg var vs strat space}
In all of our proofs, we only use the fact that a complex algebraic variety $X$ has the structure of a topologically stratified space (in the sense of Definition \ref{top strat space def}). We do not use the algebraic structure of $X$ or of any of its strata. Thus, if one replaces the words complex algebraic variety by "topologically stratified space with only even dimensional strata" and is careful with the notion of dimension, then one obtains the same statements for this larger class of objects. In particular, our results will also hold for complex analytic spaces. If one is interested in the more general statement, then one should use the notion of topological dimension given in \cite{HW}. The main reason we make this simplification is to avoid constantly switching between complex dimension (more natural when stating our results) and real dimension (more natural when discussing stratified spaces). This will hopefully alleviate some of the confusion in the rest of the paper.
\end{rmk}

\subsection{Identifying the Open Dense Union of Strata}\label{U1}
In this section, we identify an open dense subset of the complex algebraic variety $X$ that will serve as the starting point of Deligne's construction. Fix a stratification $\mathfrak{X}$ of $X$. For each $0 \leq m \leq n$, let $U^m$ be the union of all $m$-dimensional strata which are open in $X$ and let $X^m \coloneqq \overline{U^m}$. Since the closure of a stratum is a union of strata of lower dimension by the axiom of the frontier, $X^m$ is a union of strata and $\partial X^m = X^m - U^m$ is a union of strata of lower dimension. In particular dim$_\CC\partial X^m \leq m-1$. Each $X^m$ is therefore a pseudomanifold with stratifications 
$$\mathfrak{X}^m : X^m_m \supseteq X^m_{m-1} \supseteq \cdots \supseteq X^m_0 \supseteq X^m_{-1} = \varnothing,$$ 
where $X^m_k = X^m \cap X_k$ and $X^m_k - X^m_{k-1}$ consists of strata of pure complex dimension $k$ for $k \leq m$. Set $U^m_k = X ^m - X_{m-k}$. Notice that in general, $U^m_k$ is only locally closed in $X$ and $U^m_1 = U^m$. Set
\begin{equation}
U_1 = \bigsqcup_{m=1}^n U^m.
\end{equation}
We will see in Corollary \ref{irred comp cor} that $X^m$ actually the union of all $m$-dimensional irreducible components of $X$.

\begin{rmk}
If $X$ is of pure dimension $n$, then $U^n = X - X_{n-1}$ and $U^m = \varnothing$ for $m < n$. Moreover, $U^n$ is dense in $X$ and $X^n = \overline{U^n} = X$. 
\end{rmk}

\begin{prop} \label{open dense strata}
The open set $U_1 = \bigsqcup_{m=1}^n U^m$ is dense, i.e. $\bigcup_{m=1}^n X^m = X$

\begin{proof}
Suppose $\bigcup_{m=1}^n X^m$ is strictly contained in $X$. Then the set complement $(\bigcup_{m=1}^n X^m)^c = \bigsqcup_{i \in I} S_i$ is a union of strata. Since the closure of any stratum is a union of lower dimensional strata, there are two cases. Fix any stratum $S_1 \subseteq (\bigcup_{m=1}^n X^m)^c $. 
\begin{case}
We have $S_1\subseteq \overline{S_k}$ for some $k \in I$. In this case, since $S_1^c = \bigcup_{m=1}^n X^m \sqcup \bigsqcup_{i \neq 1} S_i$, we have 
$$ \overline{S_1^c} = \bigcup_{m=1}^n X^m \cup \bigcup_{i \neq 1} \overline{S_i}.$$ 
Since $S_1 \subseteq \overline{S_k}$, we have that $S_1 \subseteq \overline{S_1^c} = \text{interior}(S_1)^c$. This implies that $S_1$ has empty interior which is a contradiction since $S_1$ is a nonempty stratum. 
\end{case}
\begin{case}
The strata $S_1$ does not meet $S_k$ for any $k \neq i$. This implies that 
$$ \overline{S_1^c} = \bigcup_{m=1}^n X^m \cup \bigcup_{i \neq 1} \overline{S_i} = \bigcup_{m=1}^n X^m \sqcup \bigsqcup_{i \neq 1} S_i = S_1^c.$$ 
It follows that $S_1^c = \overline{S_1^c} = \text{interior}(S_1)^c$, i.e. $S_1$ is open in $X$. This contradicts the definition of $X^m$. 
\end{case}
In either case, we have a contradiction. So we conclude that $(\bigcup_{m=1}^n X^m)^c = \varnothing$, i.e. $U_1$ is dense.
\end{proof}
\end{prop}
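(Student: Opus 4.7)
The goal is to show that $Y := X \setminus \bigcup_{m=1}^n X^m$ is empty. I would argue by contradiction, assuming $Y$ is nonempty and producing a stratum $S \subseteq Y$ that is in fact open in $X$; such a stratum would lie in $U^k$ for $k = \dim_\CC S$ and hence in $X^k$, contradicting $S \subseteq Y$.

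The first step is to observe that $Y$ is a union of strata of $\mathfrak{X}$. By the axiom of the frontier, each $X^m = \overline{U^m}$ is the union of $U^m$ with strata of strictly lower complex dimension; hence $\bigcup_m X^m$, and consequently its complement $Y$, is a union of strata. Since $Y$ is nonempty, I can choose a stratum $S \subseteq Y$ whose complex dimension $k$ is maximal among strata contained in $Y$.

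The second step is to show that such an $S$ must be open in $X$ by analyzing the local model. Fix $p \in S$ and a distinguished neighborhood $N \simeq \RR^{2k} \times \text{cone}^o(L)$ at $p$; since $Y$ is open in $X$, I may shrink $N$ so that $N \subseteq Y$. By the compatibility of the stratification with the local product structure (Remark \ref{projection rmk}), the strata of $X$ meeting $N$ are exactly $S$ itself (corresponding to the cone point) together with one stratum of complex dimension strictly greater than $k$ for each nonempty stratum of $L$. Because $Y$ is a union of strata, every stratum meeting $N \subseteq Y$ lies wholly in $Y$; maximality of $k$ then forces $L = \varnothing$. Hence $N \simeq \RR^{2k}$ is entirely contained in $S$, so $S$ is open in $X$ at $p$, and since $p$ was arbitrary, $S$ is open in $X$, yielding the desired contradiction.

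The only real subtlety is confirming that $Y$ is saturated with respect to the stratification, so that strata meeting $N$ are fully contained in $Y$ rather than merely intersecting it; once this is granted, the maximality argument together with the explicit local product structure of distinguished neighborhoods makes the rest routine.
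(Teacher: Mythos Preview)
Your argument is correct. Both your proof and the paper's proceed by contradiction, observe that the complement $Y = X \setminus \bigcup_m X^m$ is a union of strata (using the axiom of the frontier), and then produce a stratum of $Y$ that is open in $X$, contradicting the definition of the $U^m$. The difference lies in how openness is established. The paper fixes an arbitrary stratum $S_1 \subseteq Y$ and runs a two-case point-set topology argument on whether $S_1$ lies in the closure of another stratum of $Y$, concluding in Case~2 that $S_1^c$ is closed. You instead pick a stratum $S \subseteq Y$ of \emph{maximal} dimension from the outset and invoke the local product structure of a distinguished neighborhood (Remark~\ref{projection rmk}): any stratum meeting $N \subseteq Y$ other than $S$ would have strictly larger dimension and lie in $Y$ by saturation, violating maximality, so $L = \varnothing$ and $N \simeq \RR^{2k}$. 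Your choice of a maximal stratum sidesteps the case split entirely, and your use of the distinguished neighborhood makes the openness conclusion immediate from the definition of a topological stratification rather than from a closure computation; the paper's approach, by contrast, is purely point-set and does not need to unpack the local model.
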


\subsection{The Open Filtration Induced by a Stratification}

In this section, we describe a filtration of $X$ by open subsets, beginning with $U_1$, induced by a stratification $\mathfrak{X}$. The following example shows that applying Deligne's construction to certain filtrations by open sets will not produce a direct sum of intersection complexes.

\begin{ex} \label{open filt ex}
Let $E \subseteq \PP^2$ be a smooth elliptic curve and $C_E \subseteq \CC^3$ be the affine cone over $E$. Let $L$ be a line in $\CC^3$ passing through the origin that is not contained in $C_E$ and $C' = C_E \cap \{z_3 = 1\} \subset \CC^3$. Let $X = C_E \cup L$. Consider the stratification
$$\mathfrak{X} : C_E \cup L \supset L \cup C' \supset \{0\} \supset \varnothing.$$
With the notation above, $U^2 = C_E - C' - \{0\}$ and $U^1 = L - \{0\}$. Taking closures, we have $X^2 = C_E$ and $X^1 = L$. We have sets
\begin{multicols}{2}
    \begin{enumerate}[itemsep=5pt]
        \item[] $U^2_1 = X^2 - X_1 = C_E - C' - \{0\}$,
        \item[] $U^2_2 = X^2 - X_0 = C_E - \{0\}$,
        \item[] $U^2_3 = X^2-X_{-1} = C_E$.
        \item[] $U^1_1 = X^1-X_0 = L-\{0\}$,                
        \item[] $U^1_2 = X^1-X_{-1} = L$,
    \end{enumerate}
    \end{multicols}
One possible way to filter $X$ by open subsets is the following. Let 
\begin{equation*}
\begin{aligned}
U_1 &= U^2_1 \cup U^1_1 =\left( C_E - C'- \{0\} \right) \cup \left(L-\{0\} \right),\\
U_2 &= U^2_2 \cup U^1_2 = \left(C_E - \{0\} \right) \cup L = X,\\
U_3 &= U^2_3 \cup U^1_3 = C_E \cup L = X.
\end{aligned}
\end{equation*}
This gives a filtration by open subsets 
$$U_1 \xrightarrow{j_1} U_2 \xrightarrow{j_2=id} U_3.$$
We apply Deligne's construction to this filtration. Recall that $p$ denotes the middle perversity. On the open dense set $U_1$, let $I_1 = \QQ_{U^2}[2] \oplus \QQ_{U^1}[1]$. On $U_2 = X$, if we truncate at $p(2)-2 = -2$, the complex appearing in Deligne's construction is 
$$\tau_{\leq p(2) - 2}Rj_{1*}I_1 = \tau_{\leq -2}Rj_{1*}\left ( \QQ_{U^2}[2] \oplus \QQ_{U^1}[1]\right) = \tau_{\leq -2}Rj_{1*}\QQ_{U^2}[2].$$
Here we see that the truncation operation kills off the contribution from the open 1-dimensional stratum. We add this contribution back in using Deligne's construction for $U^1$, i.e. on $U_2$, set 
$$I_2 = \tau_{\leq p(2)-2}Rj_{1*}I_1 \oplus \tau_{\leq p(2) - 1}Rj_{1*}\QQ_{U^1}[1] = \tau_{\leq -2}Rj_{1*}\QQ_{U^2}[2] \oplus \tau_{\leq - 1}Rj_{1*}\QQ_{U^1}[1].$$
Notice that $I_2$ is not a direct sum of intersection complexes since the first summand is truncated at $-2$ instead of $-1$. If we instead truncate at $p(4)-2 =-1$, we have on $U_2 = X$ the complex
$$I_2' = \tau_{\leq p(4) - 2}Rj_{1*}I_1 = \tau_{\leq -1}Rj_{1*}\QQ_{U^2}[2] \oplus \tau_{\leq - 1}Rj_{1*}\QQ_{U^1}[1].$$
However, the first summand of $I_2'$ is still not the intersection complex of $C_E$. The support condition fails for $\tau_{\leq -1}Rj_{1*}\QQ_{U^2}[2]$ since $\{x \in C_E \ | \ \H^1(\tau_{\leq -1}Rj_{1*}\QQ_{U^2}[2])_x \neq 0\} = C'$ is not zero dimensional.  
\end{ex}

The problem with the filtration in the example is that strata of differing dimensions were added at the same stage in the filtration. Our filtration of the complex algebraic variety $X$ by open sets described below avoids this issue and is motivated by the following observation. If $X$ is pure of dimension $n$ with stratification $\mathfrak{X}$, then the induced filtration by open subsets is given by
$$\varnothing \subseteq U_1 \subseteq \cdots \subseteq U_{n+1} = X,$$
where $U_k = X - X_{n-k}$. It follows that $U_{k+1}-U_k = (X-X_{n-k-1}) - (X-X_{n-k}) = X_{n-k}-X_{n-k-1}$
consists of all codimension $k$ strata of $X$. None of these strata can be open since any open subset of pure dimensional variety $X$ has dimension $n$. So $U_{k+1} - U_k$ consists of all non-open codimension $k$ strata of $X$. We would like our filtration of $X$ by open sets to satisfy the same property.

Let 
\begin{gather}
W_k = \bigcup_{m=n-k+2}^n U^m_{m-n+k},\\
U_k = W_k \sqcup \bigsqcup_{m=1}^{n-k+1}U^m_1.
\end{gather}

A priori, the sets $U_k$ are not necessarily open in $X$ since the sets $U^m_{m-n+k} = X^m - X_{n-k}$ are only locally closed in $X$. However, we have the following lemma.

\begin{lem}
The set $U_k$ is open in $U_{k+1}$ for each $ 1 \leq k \leq n$.
\begin{proof}
We show that if $p \in U_k$, there is a neighborhood $N$ of $p$ in $U_{k+1}$ that is contained in $U_k$. If $p \in \bigsqcup_{m=1}^{n-k} U^m$, then we are done. If $p \in \bigcup_{m=n-k+1}^n U^m_{m-n+k}$, let $N = U_{k+1} \cap X_{n-k}^c$. Since $\bigsqcup_{m=1}^{n-k} U^m \subseteq X_{n-k}$, we see that $N = W_{k+1} \cap X^c_{n-k}$. Notice that $p \in N$ and $N$ is open in $U_{k+1}$. We claim that $N \subseteq U_k$. Let $q \in N$. Since $q \in W_{k+1}$, $q \in U^m_{m-n+k+1} = X^m -X_{n-k-1}$ for some $m \geq n-k+1$. Since $q \in X_{n-k}^c$, we see that $q \in U^m_{n-k} \subseteq U_k$. 
\end{proof}
\end{lem}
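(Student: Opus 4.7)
The plan is to pointwise find, for each $p \in U_k$, an open neighborhood of $p$ in $U_{k+1}$ that is contained in $U_k$. Before that I would do a quick containment check, $U_k \subseteq U_{k+1}$, which follows from $X^m - X_{n-k} \subseteq X^m - X_{n-k-1}$ (this both pushes $W_k$ into $W_{k+1}$ and absorbs $U^{n-k+1}_1$ into the $m = n-k+1$ summand of $W_{k+1}$) together with the obvious inclusions $U^m_1 \subseteq U^m_1$ for $m \leq n-k$.

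For the pointwise statement, I would split according to which summand of $U_k = W_k \sqcup \bigsqcup_{m=1}^{n-k+1} U^m_1$ contains $p$. If $p \in U^m = U^m_1$ for some $1 \leq m \leq n-k$, then $U^m$ is by definition a union of strata that are open in $X$, hence itself open in $X$, and $U^m \subseteq U_k$, so there is nothing to check. Otherwise $p$ lies in a stratum of complex dimension at least $n-k+1$; equivalently $p \in X^m - X_{n-k}$ for some $m \geq n-k+1$. The natural candidate neighborhood in this case is
$$ N = U_{k+1} \cap (X - X_{n-k}), $$
which is open in $U_{k+1}$ because $X_{n-k}$ is closed in $X$, and which contains $p$ since $p \notin X_{n-k}$.

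The one substantive step is verifying $N \subseteq U_k$. Given $q \in N$, I would use the decomposition $U_{k+1} = W_{k+1} \sqcup \bigsqcup_{m' = 1}^{n-k} U^{m'}_1$. For $m' \leq n-k$ the piece $U^{m'}_1 \subseteq X_{m'} \subseteq X_{n-k}$ is disjoint from $N$, so necessarily $q \in W_{k+1}$, meaning $q \in X^{m'} - X_{n-k-1}$ for some $m' \geq n-k+1$. Combined with $q \notin X_{n-k}$, this places $q$ in $X^{m'} - X_{n-k}$, which equals $U^{n-k+1}_1$ when $m' = n-k+1$ and equals $U^{m'}_{m'-n+k} \subseteq W_k$ when $m' \geq n-k+2$; in either subcase $q \in U_k$.

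I do not foresee a genuine conceptual obstacle here. The content of the lemma is really a bookkeeping confirmation that the filtration was arranged so that, between consecutive stages, only strata of codimension exactly $k$ of the "correct type" get added. The only mildly delicate point is tracking how the stratum family $U^{n-k+1}_1$ straddles the definitions of $U_k$ and of $W_{k+1}$, and this is precisely the reason $N$ must be defined with $X - X_{n-k}$ rather than with $X - X_{n-k-1}$: the smaller closed set would still contain some of the open codimension-$k$ strata that make up $U_k$, and removing it would push $p$ itself out of the candidate neighborhood.
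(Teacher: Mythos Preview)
Your proof is correct and follows essentially the same approach as the paper's: the same case split on whether $p$ lies in the low-dimensional open strata or in $\bigcup_{m=n-k+1}^n (X^m - X_{n-k})$, the same candidate neighborhood $N = U_{k+1} \cap X_{n-k}^c$, and the same verification that $N \subseteq U_k$ via the observation that the $U^{m'}_1$ pieces with $m' \leq n-k$ lie in $X_{n-k}$. You add the (harmless) preliminary containment $U_k \subseteq U_{k+1}$ and spell out the subcases $m' = n-k+1$ versus $m' \geq n-k+2$ at the end, but these are only expository differences.
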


Since $U_{n+1} = X$, the previous lemma implies that $U_n$ is open in $X$. It follows from descending induction on $k$ that $U_k$ is open in $X$ for all $1 \leq k \leq n$. This gives a finite filtration $\mathfrak{U}$ of $X$ by open subsets
\begin{equation}\label{open filt eq}
\mathfrak{U}: \varnothing \subseteq U_1 \subseteq \cdots \subseteq U_n \subseteq U_{n+1} = X.
\end{equation}
We have inclusions $U_k \xrightarrow{j_k} U_{k+1} \xleftarrow{i_k} \left( U_{k+1} - U_k\right)$. We will refer to the filtration $\mathfrak{U}$ as the \textit{open filtration induced by} $\mathfrak{X}$. 

We conclude this section with several facts about the structure of the open filtration $\mathfrak{U}$. 

\begin{lem}\label{Wk+1-Wk}
We have $U_{k+1} - U_k = \left( W_{k+1} - W_k \right) - U^{n-k+1}_1$.
\begin{proof}
Notice that
\begin{equation*}
\begin{aligned}
U_{k+1} - U_k &= U_{k+1} - \left(W_k \sqcup \bigsqcup_{m=1}^{n-k+1} U^m_1\right)\\
&= \left( U_{k+1} - W_k \right) - \bigsqcup_{m=1}^{n-k+1} U^m_1\\
&= \left (\left(W_{k+1}-W_k\right) - \bigsqcup_{m=1}^{n-k+1} U^m_1\right) \cup \left( \left( \bigsqcup_{m=1}^{n-k} U^m_1 - W_k \right) - \bigsqcup_{m=1}^{n-k+1} U^m_1 \right)\\
&= \left (W_{k+1}-W_k \right) - \bigsqcup_{m=1}^{n-k+1} U^m_1\\
&= \left( W_{k+1}-W_k \right) - U^{n-k+1}_1,
\end{aligned}
\end{equation*}
where the last equality holds since $\bigsqcup_{m=1}^{n-k} U^m_1 \subseteq W_{k+1}^c$. 
\end{proof}
\end{lem}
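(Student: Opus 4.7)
My plan is to proceed by a direct set-theoretic manipulation, substituting the definitions and tracking how the various pieces reorganize when passing from $U_k$ to $U_{k+1}$. Explicitly, I would begin by writing out
\[ U_{k+1} \;=\; W_{k+1} \,\sqcup\, \bigsqcup_{m=1}^{n-k} U^m_1, \qquad U_k \;=\; W_k \,\sqcup\, \bigsqcup_{m=1}^{n-k+1} U^m_1, \]
and then subtract. The "unit" pieces $\bigsqcup_{m=1}^{n-k} U^m_1$ appear in both sets, so they should cancel from the difference, leaving essentially $W_{k+1}$ minus $W_k$ minus the one extra unit piece $U^{n-k+1}_1$ sitting in $U_k$.

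The conceptual content is the observation that the "extra" piece $U^{n-k+1}_1$ appearing in $U_k$ but not in the unit part of $U_{k+1}$ is in fact not discarded: it gets absorbed into $W_{k+1}$. Indeed, setting $m=n-k+1$, the subscript $m-n+k+1$ equals $1$, so $U^{n-k+1}_1 = U^m_{m-n+k+1}$ is one of the terms defining $W_{k+1}$. This explains both why the extra subtraction term in the statement is exactly $U^{n-k+1}_1$, and why it must still be subtracted even after the unit pieces cancel.

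To make the cancellation rigorous, I would check disjointness between the building blocks. The key input is that different $U^m$ (for distinct complex dimensions $m$) are disjoint, since they are unions of strata of the stated complex dimension. In particular $\bigsqcup_{m=1}^{n-k} U^m_1$ is disjoint from $W_{k+1}$ (the latter only involves $U^m$ with $m \geq n-k+1$), so the decomposition of $U_{k+1}$ above is a genuine disjoint union, and similarly for $U_k$. These disjointness facts let the cancellations go through cleanly.

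The main (and only) obstacle I expect is bookkeeping: keeping straight the range indices $n-k+1$ versus $n-k+2$ in the definitions of $W_k$ and $W_{k+1}$, and the subscript shifts $m-n+k$ versus $m-n+k+1$. I would therefore execute the computation as a short chain of set equalities, first rewriting $U_{k+1} - U_k$ as $(U_{k+1} - W_k) - \bigsqcup_{m=1}^{n-k+1} U^m_1$, then splitting the unit pieces in $U_{k+1}$'s expansion and canceling the $\bigsqcup_{m=1}^{n-k} U^m_1$ contribution against itself, and finally using $\bigsqcup_{m=1}^{n-k} U^m_1 \subseteq W_{k+1}^c$ to collapse the remaining terms into $(W_{k+1} - W_k) - U^{n-k+1}_1$.
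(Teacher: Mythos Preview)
Your proposal is correct and follows essentially the same route as the paper: expand the definitions of $U_{k+1}$ and $U_k$, rewrite the difference as $(U_{k+1}-W_k)-\bigsqcup_{m=1}^{n-k+1}U^m_1$, split off and cancel the unit pieces, and use $\bigsqcup_{m=1}^{n-k}U^m_1\subseteq W_{k+1}^c$ for the final simplification. Your additional remark that $U^{n-k+1}_1$ is literally one of the summands in $W_{k+1}$ is a nice bit of context the paper does not make explicit, but the actual chain of set identities is the same.
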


\begin{lem} \label{non open strat lem}
The set $U_{k+1} - U_k$ consists of all non-open $(n-k)$-dimensional strata, i.e. $X_{n-k} - X_{n-k-1} = \left(U_{k+1} - U_k\right) \sqcup U^{n-k}_1$.
\begin{proof}
Suppose $x \in X_{n-k}-X_{n-k-1}$. Let $S \subseteq X_{n-k}-X_{n-k-1}$ be the $(n-k)$-dimensional stratum containing $x$. Since $X^m$ is a union of strata and $X = \cup_{m=1}^nX^m$, $S \subseteq X^m$ for some $m \geq n-k$. Since $S \subseteq X_{n-k-1}^c$, $S \subseteq X^m - X_{n-k-1} \subseteq U^m_{m-n+k+1}\subseteq U_{k+1}$. If $S$ is open, then $S \subseteq U^{n-k}_1$. If $S$ is not open, then $S \subseteq W_{k+1}$. In this case, suppose that $S \subseteq U_k$. Since $S$ is not open, $S \subseteq W_k$. In particular $S \subseteq U^m_{m-n+k} = X^m - X_{n-k}$ for some $m \geq n-k+2$. This implies that $S \subseteq X_{n-k}^c$ which is a contradiction. So $x \in S \subseteq U_{k+1}-U_k$. It follows that $X_{n-k} - X_{n-k-1} \subseteq  \left(U_{k+1} - U_k\right) \sqcup U^{n-k}_1$.

Conversely, if $x \in U_{k+1}-U_k$, then $x \notin U_k$ implies that $x \notin U^m_{m-n+k}= X^m - X_{n-k}$ for all $m$. It follows that $x \notin X_{n-k}^c$, i.e. $x \in X_{n-k}$. Since $x \in U_{k+1}$, $x \in U^m_{m-n+k+1} = X^m - X_{n-k-1}$ for some $m$. In particular, $x \in X_{n-k-1}^c$. It follows that $x \in X_{n-k}-X_{n-k-1}$.  If $x \in U^{n-k}$, then $x \in X_{n-k}-X_{n-k-1}$ by definition. It follows that $\left(U_{k+1} - U_k\right) \sqcup U^{n-k}_1 \subseteq X_{n-k} - X_{n-k-1}$. 
\end{proof}
\end{lem}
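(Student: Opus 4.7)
The plan is to establish the identity $X_{n-k} - X_{n-k-1} = (U_{k+1} - U_k) \sqcup U^{n-k}_1$ by verifying both inclusions and the disjointness of the two pieces on the right. The geometric picture is that every $(n-k)$-dimensional stratum $S$ falls into one of two types: either $S$ is open in $X$, in which case $S \subseteq U^{n-k} = U^{n-k}_1$, or $S$ is not open, in which case $S \subseteq X^m$ for some $m \geq n-k+1$, and the definition of our open filtration is calibrated so that $S$ is added at stage $k+1$ but not at stage $k$.

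For the forward inclusion, I would start with $x \in X_{n-k} - X_{n-k-1}$ and let $S$ be its stratum, necessarily of complex dimension $n-k$. By Proposition \ref{open dense strata} together with the fact that each $X^m$ is a union of strata, $S \subseteq X^m$ for some $m \geq n-k$. If $S$ is open, then the fact that $\partial X^{n-k}$ has complex dimension at most $n-k-1$ forces $S \subseteq U^{n-k}$, hence $x \in U^{n-k}_1$. If $S$ is not open, the same dimension count precludes $m = n-k$, so $m \geq n-k+1$, and the reindexing identity $U^m_{m-n+k+1} = X^m - X_{n-k-1}$ places $S$ inside $W_{k+1} \subseteq U_{k+1}$. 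I would then confirm $x \notin U_k$ by ruling out each summand of $U_k = W_k \sqcup \bigsqcup_{m'=1}^{n-k+1} U^{m'}_1$: membership in $W_k = \bigcup_{m' \geq n-k+2}(X^{m'} - X_{n-k})$ would put $x$ outside $X_{n-k}$, contradicting $x \in X_{n-k} - X_{n-k-1}$; membership in some $U^{m'}_1$ would force $m' = n-k$ by dimension and render $S$ open.

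For the reverse inclusion, $U^{n-k}_1 = X^{n-k} - X_{n-k-1} \subseteq X_{n-k} - X_{n-k-1}$ is immediate from $X^{n-k} \subseteq X_{n-k}$. For a point $x \in U_{k+1} - U_k$, the decomposition $U_{k+1} = W_{k+1} \sqcup \bigsqcup_{m'=1}^{n-k} U^{m'}_1$ together with $U^{m'}_1 \subseteq U_k$ for $m' \leq n-k$ forces $x \in W_{k+1}$, so $x \in X^m - X_{n-k-1}$ for some $m \geq n-k+1$; in particular $x \notin X_{n-k-1}$. The condition $x \notin U_k$ then rules out $x \in X^m - X_{n-k}$ (this lies in $W_k$ when $m \geq n-k+2$, or equals a subset of $U^{n-k+1}_1 \subseteq U_k$ when $m = n-k+1$), so $x \in X_{n-k}$. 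Disjointness is immediate from $U^{n-k}_1 \subseteq U_k$.

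The main obstacle I anticipate is purely bookkeeping: three overlapping indexing conventions ($X_j$ for the stratification, $X^m$ for the closure of the union of open $m$-dimensional strata, and the subscript on $U^m_j = X^m - X_{m-j}$) interact in the definitions of $W_k$ and $U_k$, and the crucial reindexing $U^m_{m-n+k} = X^m - X_{n-k}$ has to be tracked carefully through each case. Once the indexing is kept straight, the argument reduces to a clean case analysis using only the definitions together with the axiom of the frontier and Proposition \ref{open dense strata}.
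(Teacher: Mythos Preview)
Your proposal is correct and follows essentially the same approach as the paper's proof: both argue by a case analysis on whether the $(n-k)$-dimensional stratum $S$ is open in $X$, and both verify membership in $U_{k+1}$ and non-membership in $U_k$ by unpacking the decomposition $U_k = W_k \sqcup \bigsqcup_{m'=1}^{n-k+1} U^{m'}_1$ together with the reindexing $U^m_{m-n+k} = X^m - X_{n-k}$. Your treatment is slightly more explicit in places (e.g.\ the dimension argument ruling out $m=n-k$ in the non-open case, and the explicit mention of disjointness via $U^{n-k}_1 \subseteq U_k$), but the logical structure is the same.
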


\begin{lem} \label{U^m closed in U_k}
Fix $1 \leq k \leq n$. Then $U^m_{m-n+k}$ is closed in $U_k$ for $n-k+1 \leq m \leq n$ and $U^m_1$ is closed in $U_k$ for $1 \leq m \leq n-k$. 
\begin{proof}
Suppose $n-k+1 \leq m \leq n$. Since $X^m$ is closed in $X$, it suffices to show that $U^m_{m-n+k} = X^m \cap U_k$. The inclusion $U^m_{m-n+k} \subseteq X^m \cap U_k$ follows from the definition of $U_k$. Now let $x \in X^m \cap U_k$. Since $x \in X^m$ and $m \geq n-k+1$, $x \in W_k$ or $x \in U^{n-k+1}_1$. It follows that $x \in U^l_{l-n-k} = X^l - X_{n-k}$ for some $n-k+1 \leq l \leq n$. In particular, $x \notin X_{n-k}$. It follows that $x \in X^m - X_{n-k} = U^m_{m-n+k}$. We conclude that $U^m_{m-n+k} = X^m \cap U_k$. 

A similar argument shows that $U^m_1 = X^m \cap U_k$ for $1 \leq k \leq n-k$.
\end{proof}
\end{lem}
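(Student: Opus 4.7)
The plan is to reduce both closedness statements to the single identity: the set in question equals $X^m \cap U_k$. Since $X^m = \overline{U^m}$ is closed in $X$ (hence in $U_k$), this identification immediately yields the conclusion.

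For $n-k+1 \leq m \leq n$, the forward inclusion $U^m_{m-n+k} \subseteq X^m \cap U_k$ is essentially by inspection: $U^m_{m-n+k} = X^m - X_{n-k}$ lies in $X^m$, and it sits inside $U_k$ either as a summand of $W_k$ (when $m \geq n-k+2$) or, when $m = n-k+1$, as the summand $U^{n-k+1}_1$ of the disjoint union $U_k = W_k \sqcup \bigsqcup_{l=1}^{n-k+1} U^l_1$. For the reverse inclusion, given $x \in X^m \cap U_k$, the disjoint union description places $x$ either in $W_k$ -- in which case $x \in X^l - X_{n-k}$ for some $l \geq n-k+2$, so $x \notin X_{n-k}$ -- or in $U^l_1$ for some $1 \leq l \leq n-k+1$. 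In the latter case, the unique open $l$-dimensional stratum containing $x$ must lie in $X^m = \overline{U^m}$, which by disjointness of strata forces that stratum to coincide with one of the open $m$-dimensional strata making up $U^m$; this yields $l = m = n-k+1$ and $x \in U^{n-k+1}_1 = U^m_{m-n+k}$. Either way $x \in X^m - X_{n-k}$, as desired.

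For the range $1 \leq m \leq n-k$, the same strategy establishes $U^m_1 = X^m \cap U_k$. The forward inclusion is by inspection, since $U^m_1$ is one of the summands of $U_k$ (as $m \leq n-k+1$). For the reverse inclusion, the alternative $x \in W_k$ is ruled out immediately by the fact that $X^m \subseteq X_m \subseteq X_{n-k}$, while every point of $W_k$ avoids $X_{n-k}$. Hence $x$ must lie in some $U^l_1$, and the stratum identification above again forces $l = m$.

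The one substantive step in both cases is the identification $l = m$ when $x$ lies in both an open $l$-dimensional stratum and in $X^m = \overline{U^m}$. This relies on the axiom of the frontier (which guarantees that $X^m$ is a union of strata), combined with the openness of the $l$-stratum in question and the disjointness of strata, to conclude that the $l$-stratum containing $x$ must be one of the open $m$-strata making up $U^m$. Everything else is bookkeeping with the definitions of $U_k$, $W_k$, and $U^m_j$.
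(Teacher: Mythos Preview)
Your proof is correct and follows essentially the same route as the paper: both reduce the closedness assertion to the identity $U^m_{m-n+k} = X^m \cap U_k$ (respectively $U^m_1 = X^m \cap U_k$) and then verify the two inclusions directly from the definition of $U_k$. The only difference is that you make explicit the one step the paper leaves implicit, namely that an \emph{open} $l$-dimensional stratum contained in $X^m = \overline{U^m}$ must in fact be one of the open $m$-strata (so $l=m$); your justification via openness, the axiom of the frontier, and disjointness of strata is exactly the right one.
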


\subsection{Construction of $IC(\mathfrak{X},\L)$}

Let $\mathfrak{X}$ be a stratification of $X$ and $\mathfrak{U}$ the open filtration induced by $\mathfrak{X}$. Let $\L$ be a local system on the open dense subset $U_1 \subseteq X$. 

\begin{rmk}\label{local sys open dense rmk}
We can express $\L$ as $\L = \bigoplus_{m=1}^n a^m_{1*}\L^m$ where $\L^m \coloneqq \L|_{U^m}$ is a local system on $U^m$ and $a^m_1:U^m \to U_1$ is inclusion of a closed subset. We will often abuse notation and identify $a^m_{1*}\L^m$ with $\L^m$. Since each $\L^m$ is a local system on $U^m$, we can associate with $\L$ the complex $\bigoplus_{m=1}^n \L^m[m]$.
\end{rmk}

Define a complex $IC(\mathfrak{X}, \L)$ on $X$ recursively as follows: set

\begin{equation}\label{gen Deligne cons}
\begin{gathered}
I_1 = \bigoplus_{m=1}^n \L^m[m] \text{ on } U_1,\\
I_{k+1} = \tau_{\leq k-1-n}Rj_{k*}I_k \oplus \bigoplus_{m=1}^{n-k} \L^m[m] \text{ on } U_{k+1},
\end{gathered}
\end{equation}
and let $IC(\mathfrak{X}, \L) = I_{n+1}$. Note that the truncation is done with respect to the middle perversity.

We refer to $IC(\mathfrak{X},\L)$ as the object obtained by the Deligne's construction with respect to the stratification $\mathfrak{X}$ and the local system $\L$. Note that this construction only uses the filtration structure of $\mathfrak{X}$. We emphasize that we are only shifting the local system by the complex dimension of $U^m$. This shift is done so that our complex $IC(\mathfrak{X},\L)$ agrees with the indexing convention discussed at the end of the introduction.

We show below that the complex $IC(\mathfrak{X}, \L)$ can be interpreted as a direct sum of Deligne-Goresky-MacPherson intersection complexes. 
Let $X$ be a complex algebraic variety of complex dimension $n$ with stratification $\mathfrak{X}$. Recall that the stratification $\mathfrak{X}$ induces an open dense subset $U_1 = \bigsqcup_{m=1}^n U^m$. We saw that $X = \bigcup_{m=1}^n X^m$ where $X^m = \overline{U^m}$. Corollary \ref{irred comp cor} will imply that $X^m$ can also be interpreted as the union of all $m$-dimensional irreducible components of $X$.
Let $\L = \bigoplus_{m=1}^n\L^m$ be a local system on the open dense union of strata $U_1$. Let $IC(\mathfrak{X}^m,\L^m)$ be the object obtained by Deligne's construction with respect to the induced stratification $\mathfrak{X}^m$ of $X^m$ and the local system $\L^m$ on $U^m$ for the pure dimensional variety $X^m$. Notice that $IC(\mathfrak{X}^m,\L^m)$ is precisely the Deligne-Goresky-MacPherson intersection complex of $X^m$. Let $a^m: X^m \to X$ be inclusion.

\begin{prop} \label{IC is direct sum}
With the notation above, we have that $$IC(\mathfrak{X},\L) \simeq \bigoplus_{m=1}^n a^m_* IC(\mathfrak{X}^m, \L^m).$$ 
\begin{proof}
Fix $1 \leq k \leq n$ and $n-k+1 \leq m \leq n$. Let $\bullet \coloneqq m-n+k$. Consider the cartesian diagram
\begin{center}
\begin{tikzcd}
U_{\bullet}^m\arrow[r, "a^m_\bullet"] \arrow[d, "j^m_\bullet"] \arrow[dr, phantom, "\square"] & U_{k} \arrow[d,"j_k"] \\
U_{\bullet+1}^m \arrow[r,"a^m_{\bullet +1}"] & U_{k+1}
\end{tikzcd}
\end{center}
 where all maps are inclusions.  Lemma \ref{U^m closed in U_k} implies that the maps $a^m_\bullet$ and $a^m_{\bullet + 1}$ are inclusions of closed subsets. It follows that
\begin{equation*}
Rj_{k*}a^m_{\bullet*}  \simeq R(j_k \circ a^m_\bullet)_* = R(a^m_{\bullet +1} \circ j^m_{\bullet})_* \simeq a^m_{\bullet+1*} Rj^m_{\bullet*}.
\end{equation*}
Now, notice that the complex $IC(\mathfrak{X},\L)$ is a direct summand of complexes of the form
\begin{equation*}
\begin{aligned}
\tau_{\leq -1}Rj_{n*} \cdots \tau_{\leq -m} Rj_{n-m+1*} a^m_{1*} \L^m[m].
\end{aligned}
\end{equation*}
Using the above commutation relation and the fact that $a^m_{\bullet*}$ is exact, we can iteratively move $a^m_{1*}$ to the left. We conclude that
\begin{equation*}
\tau_{\leq -1}Rj_{n*} \cdots \tau_{\leq -m} Rj_{n-m+1*} a^m_{1*} \L^m[m] \simeq a^m_* \tau_{\leq -1} Rj^m_{m*} \cdots \tau_{\leq -m} Rj^m_{1*}\L^m[m].
\end{equation*}
It follows that $IC(\mathfrak{X},\L) = \bigoplus_{m=1}^n a^m_* IC(\mathfrak{X}^m, \L^m)$. 
\end{proof}
\end{prop}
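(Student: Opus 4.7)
The plan is to unfold the recursion \eqref{gen Deligne cons} and regroup its summands by the index $m$. Because both $Rj_{k*}$ and $\tau_{\leq \cdot}$ commute with finite direct sums, the construction of $I_{n+1} = IC(\mathfrak{X},\L)$ splits cleanly: every fresh copy of $\L^{m}[m]$ introduced at some stage $s \leq n-m+1$ generates an independent summand whose image in $I_{n+1}$ is
$$A_{(m,s)} \;=\; \tau_{\leq -1}Rj_{n*}\,\tau_{\leq -2}Rj_{n-1*}\cdots \tau_{\leq s-1-n}Rj_{s*}\bigl(\L^m[m]\bigr),$$
with $\L^m[m]$ extended by zero to $U_{s}$. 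Since $\L^m[m]$ is concentrated in cohomological degree $-m$, its derived pushforward has cohomology supported in degrees $\geq -m$, so the first truncation $\tau_{\leq s-1-n}$ annihilates the whole thing whenever $s-1-n < -m$, i.e.\ whenever $s<n-m+1$. Only the ``late'' copy $T^m := A_{(m,n-m+1)}$ survives, giving
$$I_{n+1} \;\simeq\; \bigoplus_{m=1}^n T^m, \qquad T^m = \tau_{\leq -1}Rj_{n*}\cdots \tau_{\leq -m}Rj_{n-m+1*}\bigl(a^m_{1*}\L^m[m]\bigr),$$
where $a^m_1:U^m \hookrightarrow U_{n-m+1}$ is the closed embedding supplied by Lemma \ref{U^m closed in U_k}.

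The second step is to migrate the closed pushforward $a^m_{\bullet *}$ from the inside of $T^m$ all the way to the outside. The cartesian square in the statement, combined with the trivial identity of compositions $j_k\circ a^m_\bullet = a^m_{\bullet+1}\circ j^m_\bullet$ and the exactness of closed pushforwards, yields the base change relation
$$Rj_{k*}\,a^m_{\bullet *} \;\simeq\; a^m_{\bullet+1,*}\,Rj^m_{\bullet *}.$$
Because $a^m_{\bullet+1,*}$ is exact it also commutes with $\tau_{\leq c}$, so at each stage the pair $(\tau_{\leq c},Rj_{k*})$ slides past $a^m_{\bullet *}$ at the cost of replacing $(j_k, U_k, U_{k+1})$ by $(j^m_\bullet, U^m_\bullet, U^m_{\bullet+1})$. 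Iterating $m$ times, the innermost $a^m_1$ becomes the outermost $a^m_{m+1,*} = a^m_*:X^m \hookrightarrow X$, producing
$$T^m \;\simeq\; a^m_*\Bigl[\tau_{\leq -1}Rj^m_{m*}\cdots \tau_{\leq -m}Rj^m_{1*}\L^m[m]\Bigr],$$
whose bracketed complex is exactly the Deligne construction for the pure $m$-dimensional variety $X^m$ with stratification $\mathfrak{X}^m$ and local system $\L^m$, hence equals $IC(\mathfrak{X}^m,\L^m)$.

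I expect the main obstacle to be the bookkeeping in the first step: one must verify that the premature copies of $\L^m[m]$ contributed in $I_1,\ldots,I_{n-m}$ are genuinely killed by the chain of truncations, so that exactly one copy per $m$ survives. This relies on the cohomological lower bound on $Rj_{s*}\L^m[m]$ together with the monotonicity of the truncation levels $s-1-n$ as $s$ increases. Once that vanishing is established, the remaining ingredients are purely formal: closed base change along the cartesian square, and exactness of closed pushforward. Summing the resulting identifications $T^m \simeq a^m_*IC(\mathfrak{X}^m,\L^m)$ over $m$ then delivers the claim.
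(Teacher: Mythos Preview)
Your proof is correct and follows the same strategy as the paper's: both identify each surviving summand as $\tau_{\leq -1}Rj_{n*}\cdots\tau_{\leq -m}Rj_{n-m+1*}\,a^m_{1*}\L^m[m]$ and then use the cartesian square together with exactness of closed pushforward to commute $a^m_{\bullet*}$ past the truncation--pushforward pairs. The only difference is expository: the paper simply asserts the direct-sum decomposition of $IC(\mathfrak{X},\L)$ into these summands, while you spell out why the earlier copies of $\L^m[m]$ (those introduced at stages $s<n-m+1$) are annihilated by the first truncation $\tau_{\leq s-1-n}$, using the lower bound $\H^a(Rj_{s*}\L^m[m])=0$ for $a<-m$. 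That bookkeeping is exactly what the paper's ``notice that'' is hiding, so your version is a strict elaboration rather than a different argument.
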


\begin{rmk}\label{indexing conv rmk}
With our choice of indexing convention in the construction of $IC(\mathfrak{X},\L)$, each summand appearing in $Rj_{k*}I_k$ is truncated at the same place. If one uses the Borel convention and does not shift the initial local systems when constructing the complex $IC(\mathfrak{X}, \L)$, the summands appearing in $Rj_{k*}I_k$ will need to be truncated in different places to ensure that $IC(\mathfrak{X}, \L)$ is a direct sum of the middle perversity Deligne-Goresky-MacPherson intersection complexes. It is therefore simpler notationally to use our indexing convention when describing the construction of $IC(\mathfrak{X},L)$ than the Borel convention. Additionally, notice that with our indexing convention, the cohomology sheaves of $IC(\mathfrak{X},L)|_{W_{k+1}}$ vanish above degrees $k-1-n$ by definition. A crucial point is that this vanishing condition can be stated without using the fact that $IC(\mathfrak{X},\L)$ is a direct sum of complexes and we use it in our axiomatic characterization of $IC(\mathfrak{X},\L)$ (see Axioms [AX$1'$] in Definition \ref{AX$1'$}). One can go from the Borel indexing convention to our indexing convention (and vice-versa) by shifting each summand by the appropriate complex dimension. 
\end{rmk}

\begin{rmk}\label{arb perv rmk}
If one wishes to consider other perversities, one can try to mimic the above construction of $IC(\mathfrak{X},\L)$ (in this context, the Borel convention seems more natural). However, there is not a clear analogue of the middle perversity vanishing conditions mentioned in the previous remark. Due to this, characterizing the intersection complex for arbitrary perversities seems like a more subtle question. 
\end{rmk}

We conclude this section by illustrating the construction in the setting of Example \ref{open filt ex}.  
\begin{ex}
With the same notation as Example \ref{open filt ex}, the open filtration $\mathfrak{U}$ induced by the stratification $\mathfrak{X}$ is given by
$$\mathfrak{U}: U_1 \xrightarrow{j_1} U_2 \xrightarrow{j_2} U_3 = X,$$
where
\begin{equation*}
\begin{aligned}
U_1 &= U^2_1 \cup U^1_1 =\left( C_E - C'- \{0\} \right) \cup \left(L-\{0\} \right),\\
U_2 &= U^2_2 \cup U^1_1 = \left(C_E - \{0\} \right) \cup L -\{0\},\\
U_3 &= U^2_3 \cup U^1_2 = C_E \cup L = X.
\end{aligned}
\end{equation*}
Deligne's construction proceeds as follows. On $U_1$, set $I_1 = \QQ_{U^2}[2] \oplus \QQ_{U^1}[1]$. On $U_2$, set 
$$I_2 = \tau_{\leq -2}Rj_{1*}I_1 \oplus \QQ_{U^1}[1] = \tau_{\leq-2}Rj_{1*} \QQ_{U^2}[2] \oplus \QQ_{U^1}[1].$$
On $U_3$, set
\begin{gather*}
I_3 = \tau_{\leq -1} Rj_{2*}I_2 = \tau_{\leq -1} Rj_{2*} \left(\tau_{\leq-2}Rj_{1*} \QQ_{U^2}[2] \oplus \QQ_{U^1}[1]\right) \\
= \tau_{\leq -1} Rj_{2*}\tau_{\leq-2}Rj_{1*} \QQ_{U^2}[2] \oplus \tau_{\leq -1}Rj_{2*} \QQ_{U^1}[1].
\end{gather*}
Here we see that both summands of $IC(\mathfrak{X}) = I_3$ are intersection complexes. 
\end{ex}

\section{An Axiomatic Characterization of $IC(\mathfrak{X},\L)$}\label{axiomatic char}
When the complex algebraic variety is pure dimensional, Goresky and MacPherson give a stratification independent set of axioms characterizing the intersection complex in \cite{GM2}. We recall the axioms with respect to the middle perversity for pure dimensional varieties below for convenience.

\begin{defn} \label{AX2}
Let $X$ be a complex algebraic variety of pure complex dimension $n$. A topologically constructible complex $S$ satisfies axioms [AX2] if
\begin{enumerate}[(a)]
\item (Normalization) $S|_{X-\Sigma} = \L[n]$ where $\Sigma \subset X$ is a closed subset of complex dimension $n-1$ and $\L$ is a local system on $X-\Sigma$,
\item (Lower Bound) $\H^a(S) = 0$ for $a < -n$,
\item (Support) dim$_\CC\{x \in X \ | \ \H^a(i_x^*S)\neq 0 \} < -a$ for $a > -n$,
\item (Cosupport) dim$_\CC\{x \in X \ | \ \H^a(i_x^!S)\neq 0 \} < a$ for $a < n$,
\end{enumerate}
where $i_x:\{x\} \to X$ is inclusion. These axioms differ slightly from the ones proposed by Goresky and MacPherson in \cite{GM2} because we normalize using complex dimension rather than real dimension. 
\end{defn}

Let $X$ be a possibly reducible complex algebraic variety of complex dimension $n$. Let $X^m$ be the union of all $m$-dimensional irreducible components of $X$. Then each $X^m$ is a variety of pure dimension $m$. Let $IC(X^m)$ be the corresponding intersection complexes (with $\QQ$ coefficients). Recall that the intersection complex (with $\QQ$ coefficients) $IC(X)$ of $X$ is defined to be $IC(X) = \bigoplus_{m=1}^n IC(X^m)$. Since each summand satisfies the support and cosupport axioms, one might guess that the direct summand satisfies the support and cosupport axioms. The next example shows that this is not the case. One might also guess that the complex $IC(X)|_{X^m}$ satisfies axioms [AX2] since each summand satisfies axioms [AX2]. If this were true, there would be a natural map  $IC(X) \to \bigoplus_{m=1}^n IC(X^m)$ via the adjunction maps. The next example shows that this is also not the case.

\begin{ex} \label{ax 2 fails}
Inside $\CC^3$, let $P = \{(z_1,z_2,0) | z_i \in \CC\}$ and $L = \{(0,0,z_3) | z_3 \in \CC\}$. Let $X = P \cup L$ be the reducible variety with irreducible components $P$ and $L$. The intersection complex of $X$ is given by $IC = IC(P) \oplus IC(L) = \QQ_P[2] \oplus \QQ_L[1]$. The support and cosupport axioms [AX2](c)(d) fail for $IC$ since
\begin{equation*}
\text{dim}_\CC\{x \in X \ | \ \H^{-1}(i_x^*IC) \neq 0\} = \text{dim}_\CC L = 1 \neq 0,
\end{equation*}
and 
\begin{equation*}
\text{dim}_\CC\{x \in X \ | \ \H^{1}(i_x^!IC) \neq 0\} = \text{dim}_\CC L = 1 \neq 0,
\end{equation*}
where $i_x:\{x\}\to X$ is inclusion.
If we instead consider $IC|_P = \QQ_P[2] \oplus \tilde{i}_{0*}\QQ[1]$ where $\tilde{i}_0:\{0\} \to P$ is the inclusion, the support condition axiom [AX2](c) is satisfied. However, notice that
\begin{equation*}
\tilde{i}_0^!(IC|_P) = \tilde{i}_0^! \QQ_P[2] \oplus \tilde{i}_0^!\tilde{i}_{0*}\QQ[1] = \QQ[-2] \oplus \QQ[1].
\end{equation*}
This implies that the cosupport condition [AX2](d) fails for $IC|_P$ since $\{x \in P \ | \ \H^{-1}(\tilde{i}_x^!IC|_P) \neq 0\} = \{0\} \neq \varnothing$.
\end{ex}

In the previous example, we see that the cosupport axiom fails because we first restrict the complex $IC$ to the irreducible component $P$. If we do not first restrict, notice that 
$$i^!_0(IC) = i^!_0\QQ_P[2] \oplus i^!_0\QQ_L[1] = \QQ[-2] \oplus \QQ[-1].$$
This implies that 
$$\text{dim}_\CC\{x \in P \ | \ \H^1(i_x^!IC)\neq 0\} = \text{dim}_\CC\{0\} = 0.$$
We conclude that for $a < 2$, $\text{dim}_\CC\{x \in P \ | \ \H^a(i_x^!IC)\neq 0\} < a$. The significance of this observation is that although neither $IC$ nor $IC|_P$ satisfies the cosupport condition, $IC$ satisfies a \textit{pure dimensional} analog of the cosupport condition. We will show in the following sections that a pure dimensional analog of the support and cosupport axioms will help us characterize the complex $IC$. 

In the following sections, let $X$ be a complex algebraic variety of complex dimension $n$ with stratification $\mathfrak{X}$. Consider the open filtration
$$\mathfrak{U}: U_1 \subseteq \cdots \subseteq U_n \subseteq U_{n+1} = X,$$ 
 induced by the stratification $\mathfrak{X}$ as in Equation \ref{open filt eq}. Recall that $U_1 = \bigsqcup_{m=1}^n U^m$ where $U^m$ is the union of all open $m$-dimensional strata in $X$. Let $\L$ be a local system on $U_1$. As in Remark \ref{local sys open dense rmk}, we write $\L = \bigoplus_{m=1}^n\L^m$ where $\L^m$ is a local system on $U^m$ extended to $U_1$ by zero.

\subsection{Axioms [AX1$'$]}

\begin{defn} \label{AX$1'$}
Let $S$ be a complex on $X$ and set $S_{k} \coloneqq S|_{U_k}$. We have inclusions $U_k \xrightarrow{j_k} U_{k+1} \xleftarrow{i_k} \left( U_{k+1} - U_k \right)$. Recall that $W_k = \bigcup_{m=n-k+2}^n U^m_{m-n+k}$ where $U^m_{m-n+k}=X^m - X_{n-k}$. We say that $S$ satisfies axioms [AX1$'$] (with respect to the stratification $\mathfrak{X}$) if
\begin{enumerate}[(a)]
\item (Normalization) $S|_{U_1} \simeq \bigoplus_{m=1}^n \L^m[m]$ in $D^b_c(U_1)$, 
\item (Vanishing) for all $k \geq 1$, $\H^a(S|_{W_{k+1}}) = 0$ for $a > k-1-n$,
\item (Attaching) the induced morphism on cohomology sheaves $$\H^a(i_k^*S_{k+1}) \to \H^a(i_k^*Rj_{k*}j_k^*S_{k+1})$$ is an isomorphism for all $k \geq 1$ and $a \leq k-1-n$.
\end{enumerate}
\end{defn}

\begin{rmk}\label{ax1' vs ax1}
The stratification dependent axioms [AX1$'$] are analogous to the stratification dependent axioms [AX1] for pseudomanifolds proposed by Goresky and MacPherson in \cite{GM2}. When $X$ is a pseudomanifold, axioms [AX1$'$] reduce to axioms [AX1]. One difference between the axioms is that we do not include a lower bound axiom. This is because the lower bound axiom for pseudomanifolds is actually implied by the other axioms (in particular [AX1](a) and (d)) and is not needed to characterize the intersection complex. We will also not need an analog of the lower bound axiom to characterize the complex $IC(\mathfrak{X},\L)$. The normalization axiom [AX1$'$](a) differs from [AX1](a) in that our open dense set $U_1$ contains strata of differing dimensions. We require that each local system is shifted based on the dimension of the strata that it is supported on. The vanishing axiom [AX1$'$](b) differs from [AX1](c) in that we restrict our complex $S$ to the smaller open set $W_{k+1}$ instead of $U_{k+1}$. The reason for this is that the open set $U_{k+1}$ contains the open strata $U^m$ for $n-k \leq m \leq n$. The normalization axiom implies that $S|_{U^m} \simeq \L^m[m]$. We must therefore ignore these strata if we want the vanishing axiom to hold. The attaching axiom [AX1$'$](c) is completely analogous to [AX1](d). They both give the same vanishing condition for the cohomology sheaves when restricting the complex to the non-open $(n-k)$-dimensional strata. 

We also do not require that the complex $S$ is $\mathfrak{X}$-cc. We will eventually see that if $S$ satisfies axioms [AX1$'$], then $S$ is $\mathfrak{X}$-cc. This is analogous to Borel's discussion of constructibility in the pseudomanifold case; see \cite[V, \S 3]{B}.
\end{rmk}

\subsection{Alternative Formulations of [AX1$'$](c)} 

In this section, we give two useful alternative characterizations of [AX1$'$](c), namely [AX1$'$](c$'$) and [AX1$'$](c$''$). Recall the adjunction distinguished triangle
$$i_{k!}i_k^!S_{k+1} \to S_{k+1} \to Rj_{k*}j_k^*S_{k+1} \xrightarrow{[1]}.$$

Restricting gives the distinguished triangle
$$i_k^!S_{k+1} \to i_k^*S_{k+1} \to i_kRj_{k*}j_k^*S_{k+1} \xrightarrow{[1]}.$$

The long exact sequence in cohomology and [AX1$'$](c) imply that $\H^a(i_k^!S_{k+1}) = 0$ for $a \leq k-n$. So we see that [AX1$'$](c) is equivalent to 

\begin{itemize}
\item[(c$'$)] $\H^a(i_k^!S_{k+1}) = 0$ for $k \geq 1$ and $a \leq n - k$. 
\end{itemize}

We now relate this to the vanishing of the costalks $\H^a(i_x^!S)$. Fix $k \geq 1$. Suppose $x \in U_{k+1} - U_k$. Factor the inclusion $i_x: \{x\} \to X$ into 

\begin{center}
\begin{tikzcd}
\{x\} \arrow[r, "i_x"] \arrow[d, "\mu_x"] & X \\
U_{k+1} - U_k \arrow[r, "i_k"] & U_k \arrow[u, "\alpha"]
\end{tikzcd}
\end{center}

It follows that 
\begin{equation*}
\begin{aligned}
i_x^!S &= \mu_x^! \circ i_k^! \circ \alpha^!S\\
&= \mu_x^! \circ i_k^! S_{k+1}\\
&= \mu_x^* \circ i_k^!S_{k+1}[-2(n-k)],
\end{aligned}
\end{equation*}
where the second equality holds because $\alpha$ is an open inclusion and the third equality follows from Proposition \ref{top man} since $U_{k+1}-U_k$ is a topological manifold of real dimension $2(n-k)$. It follows that $$\H^a(i^!_xS) = \H^{a-2(n-k)}(S_{k+1})_x.$$ Hence we see that [AX1$'$](c') is equivalent to 

\begin{itemize}
\item[(c$''$)] If $x \in U_{k+1} - U_k$, then $\H^a(i_x^!S) = 0$ for all $a \leq n-k$. 
\end{itemize}

\subsection{[AX1$'$] Characterizes $IC(\mathfrak{X},\L)$}
The main goal of this section is to prove the following theorem. 
\begin{thm} \label{main thm 2}
Let $X$ be a complex algebraic variety of complex dimension $n$ with stratification $\mathfrak{X}$. Let $\mathfrak{U}$ be the open filtration induced by $\mathfrak{X}$. Let $\L = \bigoplus_{m=1}^n \L^m$ be any local system on the open dense union of strata $U_1 \subseteq X$. The functor $F$ which takes the complex $\bigoplus_{m=1}^n \L^m[m]$ to the complex $IC(\mathfrak{X},\L)$ defines an equivalence of categories between 
\begin{enumerate}[(a)]
\item the full subcategory of $D^b_c(U_1)$ whose objects are all complexes of the form $\bigoplus_{m=1}^n \L^m[m]$ where $\L^m$ is a local system on $U^m_1$ extended to $U_1$ by zero, and
\item the full subcategory of $D^b_c(X)$ whose objects are all complexes satisfying axioms [AX1$'$]
\end{enumerate}
The inverse functor $G$ assigns to any complex $S$ satisfying axioms [AX1$'$] the complex \newline $\bigoplus_{m=1}^n \H^{-m}(S|_{U_1})[m]$.
\end{thm}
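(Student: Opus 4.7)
The plan is to adapt the Goresky--MacPherson proof of the analogous characterization for pseudomanifolds (see \cite[Theorem 3.5]{GM2}) to the present setting, accounting for the direct sum structure arising from the presence of strata of several different dimensions. The key new ingredient is an open-closed decomposition of $U_{k+1}$, supplied by Lemma \ref{U^m closed in U_k}: the subset $U_{k+1}-W_{k+1} = \bigsqcup_{m=1}^{n-k} U^m$ is open-closed in $U_{k+1}$, so any complex on $U_{k+1}$ splits canonically into a piece supported on $W_{k+1}$ and direct summands on the open strata $U^m$. The argument proceeds by induction on the open filtration $\mathfrak{U}$.

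First, I verify that $F$ is well-defined, i.e. that $IC(\mathfrak{X},\L)$ satisfies [AX1$'$]. Normalization holds by construction. For the vanishing axiom [AX1$'$](b), observe that the extra summands $\bigoplus_{m=1}^{n-k}\L^m[m]$ added at stage $k+1$ of Deligne's construction are supported on $\bigsqcup_{m=1}^{n-k}U^m$, which is disjoint from $W_{k+1}$ by Lemma \ref{Wk+1-Wk}, so on $W_{k+1}$ only the iterated truncated push-forward survives and its cohomology vanishes above degree $k-1-n$ by the truncation. For the attaching axiom [AX1$'$](c), since $U_{k+1}-U_k\subseteq W_{k+1}$ by Lemma \ref{non open strat lem}, the extra summands again contribute zero on the non-open $(n-k)$-dimensional strata, and the axiom reduces to the universal property of the truncation functor.

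For the converse direction, suppose $S$ satisfies [AX1$'$] and write $S|_{U_1}\simeq\bigoplus_{m=1}^n\L^m[m]$ by normalization. I construct an isomorphism $S\simeq IC(\mathfrak{X},\L)$ inductively: assuming an isomorphism $\varphi_k: S|_{U_k}\simeq I_k$, the open-closed decomposition of $U_{k+1}$ yields a canonical splitting $S_{k+1}\simeq S_{k+1}|_{W_{k+1}}\oplus\bigoplus_{m=1}^{n-k}\L^m[m]$ (using normalization to identify each $S|_{U^m}\simeq\L^m[m]$, extended by zero). It therefore suffices to identify $S_{k+1}|_{W_{k+1}}$ with $\tau_{\leq k-1-n}Rj_{k*}I_k|_{W_{k+1}}$. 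The adjunction distinguished triangle
\begin{equation*}
i_{k!}i_k^!S_{k+1}\to S_{k+1}\to Rj_{k*}j_k^*S_{k+1}\xrightarrow{[1]}
\end{equation*}
combined with the equivalent form [AX1$'$](c$'$) ($\H^a(i_k^!S_{k+1})=0$ for $a\leq n-k$) shows that the natural map $S_{k+1}\to Rj_{k*}j_k^*S_{k+1}$ induces isomorphisms on cohomology sheaves in degrees $\leq k-1-n$. Combined with the vanishing axiom [AX1$'$](b), which annihilates $\H^a(S_{k+1}|_{W_{k+1}})$ for $a>k-1-n$, and with the inductive isomorphism $\varphi_k$ on $U_k$, this produces the desired identification $S_{k+1}|_{W_{k+1}}\simeq\tau_{\leq k-1-n}Rj_{k*}I_k|_{W_{k+1}}$.

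To promote this into a full equivalence of categories, I would apply Proposition \ref{morph lifting prop} iteratively: a morphism on $U_k$ between complexes satisfying [AX1$'$] extends uniquely to one on $U_{k+1}$, since the vanishing axiom for $I_{k+1}$ gives $\H^a(I_{k+1}|_{W_{k+1}})=0$ for $a>k-1-n$, and the attaching axiom for the target $S_{k+1}$ furnishes the required bijection on the lifted Hom-sets. The main obstacle I foresee is combining the canonical direct sum decomposition on $U_{k+1}-W_{k+1}$ with the inductive identification on $W_{k+1}$ in a way that is natural in the local system $\L$; careful bookkeeping will be needed to verify that the isomorphisms constructed at each inductive step are canonical (independent of auxiliary choices), so that $F$ and $G$ truly assemble into quasi-inverse functors rather than merely establishing the bijection on isomorphism classes of objects.
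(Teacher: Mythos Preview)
Your proposal is correct and follows essentially the same approach as the paper. The paper organizes the induction by factoring $F = F_n \circ \cdots \circ F_1$ into one-step functors $F_k:\C_k\to\C_{k+1}$ (with inverse $j_k^*$) and proving each is an equivalence; this is precisely the clean bookkeeping device that resolves your closing concern, and the naturality of $id_{\C_{k+1}}\to F_kj_k^*$ is verified by the same diagram chase with Proposition~\ref{morph lifting prop} that you anticipate.
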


We have the two immediate corollaries.
\begin{cor} \label{main cor 1}
If a complex $S$ satisfies [AX1$'$], then $S$ is canonically isomorphic to \\ 
$F(\L)=IC(\mathfrak{X},\L)$ in $\D^b_c(X)$.  
\end{cor}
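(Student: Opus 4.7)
The plan is to prove the theorem in three steps: (i) verify that $F$ sends objects of the source category to objects of the target category, i.e., that $F(\L) = IC(\mathfrak{X},\L)$ satisfies axioms [AX1$'$]; (ii) show every object $S$ satisfying [AX1$'$] is canonically isomorphic to $F(G(S))$; and (iii) establish full faithfulness of $F$. The identity $G \circ F = \mathrm{id}$ on objects is immediate from the construction, since $IC(\mathfrak{X},\L)|_{U_1} = I_1 = \bigoplus_m \L^m[m]$.

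For step (i), I would verify each axiom by unpacking Deligne's construction. Axiom (a) holds by definition of $I_1$. For axiom (b), note that the summands $\L^m[m]$ ($m \leq n-k$) appearing in $I_{k+1}$ are extended by zero from the closed subsets $U^m_1 \subseteq W_{k+1}^c$, so $IC(\mathfrak{X},\L)|_{W_{k+1}}$ only receives contributions from the iterated truncated pushforwards. The truncation at stage $k+1$ bounds the cohomology above degree $k-1-n$, and this bound persists through subsequent stages because $W_{k+1}$ is open in each $U_{k'}$ for $k' \geq k+1$ and pushforward through an open inclusion commutes with restriction to an open subset. For axiom (c), Lemma \ref{Wk+1-Wk} gives $U_{k+1}-U_k \subseteq W_{k+1}$, so the closed-stratum summands vanish under $i_k^*$, leaving $i_k^*I_{k+1} = i_k^*\tau_{\leq k-1-n}Rj_{k*}I_k$; the exactness of $i_k^*$ for closed inclusions combined with the truncation morphism yields the claimed isomorphism on cohomology sheaves.

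For step (ii), I would build canonical isomorphisms $\phi_k: S|_{U_k} \xrightarrow{\sim} I_k$ inductively. The base case $\phi_1$ is axiom (a). Given $\phi_k$, I exploit the direct sum decomposition $I_{k+1} = \tau_{\leq k-1-n}Rj_{k*}I_k \oplus \bigoplus_{m=1}^{n-k}\gamma_{m*}\L^m[m]$ to define $\phi_{k+1}$ componentwise: into each closed-stratum summand $\gamma_{m*}\L^m[m]$ via $(\gamma_m^*, \gamma_{m*})$-adjunction applied to the normalization isomorphism $\gamma_m^*S_{k+1} \simeq \L^m[m]$; into $\tau_{\leq k-1-n}Rj_{k*}I_k$ by lifting the composition $S_{k+1} \to Rj_{k*}j_k^*S_{k+1} \xrightarrow{Rj_{k*}\phi_k} Rj_{k*}I_k$ through the truncation morphism using Proposition \ref{morph lifting prop}. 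I would then verify $\phi_{k+1}$ is an isomorphism stalk by stalk: on open strata $U^m$ for $m \leq n-k$, normalization identifies both sides with $\L^m[m]$; on $U_{k+1}-U_k \subseteq W_{k+1}$, axiom (b) confines the cohomology of $S_{k+1}$ to degrees $\leq k-1-n$ and axiom (c) identifies it via the attaching map with that of $i_k^*\tau_{\leq k-1-n}Rj_{k*}I_k$. Full faithfulness in step (iii) follows by an analogous inductive argument: a morphism $\bigoplus_m \L^m[m] \to \bigoplus_m \L'^m[m]$ of source objects extends uniquely stage by stage to a morphism $I_k \to I'_k$, with existence and uniqueness supplied by Proposition \ref{morph lifting prop} at each step.

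The principal technical obstacle lies in step (ii), when applying Proposition \ref{morph lifting prop} to construct the lift into $\tau_{\leq k-1-n}Rj_{k*}I_k$: the proposition requires the source $S_{k+1}$ to have cohomology vanishing above degree $k-1-n$, which axiom (b) guarantees only on the open subset $W_{k+1}$, not on all of $U_{k+1}$ (since $S_{k+1}$ has nonzero cohomology in degrees $-m$ on the closed strata $U^m_1$ for $m \leq n-k$). The resolution is to leverage the closed-open adjunction triangle for the decomposition $U_{k+1} = W_{k+1} \sqcup \bigsqcup_m U^m_1$ (using that each $U^m_1$ is closed in $U_{k+1}$ by Lemma \ref{U^m closed in U_k}): first separate out the direct sum piece $\bigoplus_m \gamma_{m*}\L^m[m]$ via normalization and adjunction on the closed strata, then construct the remaining morphism essentially on $W_{k+1}$ where the cohomological bound holds, and extend back to $U_{k+1}$ through the closed-open structure. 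The compatibility of the two pieces is made possible by the specific numerology of the filtration $\mathfrak{U}$ and the shifts in Deligne's construction, which align the cutoff degree $k-1-n$ precisely with the dimension of the non-open strata being added at stage $k+1$.
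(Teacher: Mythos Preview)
Your approach is correct and essentially the same as the paper's; the paper organizes it by factoring $F = F_n \circ \cdots \circ F_1$ and proving each $F_k : \C_k \to \C_{k+1}$ is an equivalence with inverse $j_k^*$, which is your inductive construction of $\phi_k$ repackaged.

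One simplification you are overlooking, which dissolves what you call the ``principal technical obstacle'': the pieces $U^m_1$ for $1 \le m \le n-k$ are not merely closed in $U_{k+1}$ (Lemma \ref{U^m closed in U_k}) but \emph{clopen}, since each $U^m_1 = U^m$ is by definition a union of open strata of $X$. Hence $U_{k+1}$ is topologically the disjoint union of $W_{k+1}$ and the $U^m_1$'s, so any complex splits as a genuine direct sum $S_{k+1} = S_{W_{k+1}} \oplus \bigoplus_{m=1}^{n-k} \L^m[m]$ with no adjunction-triangle bookkeeping needed. Axiom (b) then gives $S_{W_{k+1}} \simeq \tau_{\le k-1-n} S_{W_{k+1}}$, so Proposition \ref{morph lifting prop} applies directly to the summand $S_{W_{k+1}}$ rather than to $S_{k+1}$, and the other component of $\phi_{k+1}$ is just the projection $pr_2$. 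This is exactly how the paper proceeds.
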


\begin{cor}
If a complex $S$ satisfies [AX1$'$], then $S$ is $\mathfrak{X}$-cc.
\begin{proof}
Since $S$ satisfies [AX1$'$], $S$ is isomorphic to $IC(\mathfrak{X},\L)$. Since $IC(\mathfrak{X},\L)$ is constructed by iterated pushforwards along strata and truncations applied to the constructible complex $\bigoplus_{m=1}^n \L^m[m]$, it is constructible. Therefore, $S$ is $\mathfrak{X}$-cc.
\end{proof}
\end{cor}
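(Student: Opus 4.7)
The plan is to invoke Corollary \ref{main cor 1} to reduce the statement to a fact about the model complex. Since any $S$ satisfying [AX1$'$] is canonically isomorphic in $D^b_c(X)$ to $IC(\mathfrak{X},\L)$, and being $\mathfrak{X}$-cc is a property of the isomorphism class in $D^b_c(X)$, it suffices to show that $IC(\mathfrak{X},\L)$ itself is $\mathfrak{X}$-cc. I would then prove this by induction on $k$ along the open filtration $\mathfrak{U}: U_1 \subseteq \cdots \subseteq U_{n+1} = X$, verifying that each intermediate complex $I_k$ from the recursion (\ref{gen Deligne cons}) is $\mathfrak{X}|_{U_k}$-cc.

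The base case $I_1 = \bigoplus_{m=1}^n \L^m[m]$ is immediate: each $U^m$ is a union of strata of $\mathfrak{X}$, so the restriction of $I_1$ to any stratum of $\mathfrak{X}|_{U_1}$ is either zero or a shifted local system, hence has locally constant cohomology with finitely generated stalks. For the inductive step, $I_{k+1} = \tau_{\leq k-1-n} Rj_{k*} I_k \oplus \bigoplus_{m=1}^{n-k} \L^m[m]$, and one must check that each of the three operations used to build $I_{k+1}$ from $I_k$ preserves $\mathfrak{X}$-cc-ness: (i) pushforward by the open inclusion $j_k : U_k \hookrightarrow U_{k+1}$, (ii) the truncation $\tau_{\leq k-1-n}$, and (iii) direct sum with an $\mathfrak{X}$-cc complex supported on the open strata $U^m$ for $1 \leq m \leq n-k$.

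Operations (ii) and (iii) are essentially formal. The standard $t$-structure truncations commute with restriction to any locally closed subset, so they preserve local constancy along each stratum; finite generation of stalks and boundedness are preserved as well. The added summand in (iii) is a shifted local system on an open union of strata, trivially $\mathfrak{X}|_{U_{k+1}}$-cc. The main step is (i). Here I use Lemma \ref{non open strat lem}, which identifies $U_{k+1}-U_k$ as a union of non-open $(n-k)$-dimensional strata of $\mathfrak{X}$, so both $U_k$ and its complement in $U_{k+1}$ are unions of strata of $\mathfrak{X}|_{U_{k+1}}$. For a topologically stratified space, the pushforward along such an open inclusion preserves constructibility with respect to the stratification: the local cone structure of Definition \ref{top strat space def} together with Lemma \ref{projection commute} lets one compute stalks of $Rj_{k*}I_k$ along a stratum $S \subseteq U_{k+1}-U_k$ in terms of hypercohomology on the link of $S$, which is itself a stratified space with $I_k$ restricting to an $\mathfrak{X}$-cc complex on its open part; this yields a locally constant cohomology sheaf along $S$ with finitely generated stalks (cf.\ \cite[V, \S 3]{B}).

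I do not expect a serious obstacle, since the induction closely parallels the corresponding pseudomanifold argument. The only points requiring care are that the open filtration $\mathfrak{U}$ is not the one coming from a pure-dimensional variety, which is handled by Lemma \ref{non open strat lem}, and that boundedness must be tracked through the induction; the latter follows at once because each $I_k$ has cohomology concentrated in a finite range by the built-in truncations and the finite length of $\mathfrak{U}$. The induction thus terminates at $k=n+1$ with $IC(\mathfrak{X},\L) \in D^b_{\mathfrak{X}}(X)$, which gives the corollary.
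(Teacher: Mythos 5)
Your proposal is correct and follows the same route as the paper: both reduce via Corollary \ref{main cor 1} to showing that $IC(\mathfrak{X},\L)$ itself is $\mathfrak{X}$-cc, and both obtain this from the fact that Deligne's construction builds it from the $\mathfrak{X}$-cc complex $\bigoplus_{m=1}^n \L^m[m]$ by truncations and pushforwards along open inclusions whose complements are unions of strata. The paper states this constructibility in one sentence (deferring to the standard pseudomanifold discussion in Borel), whereas you carry out the induction along the filtration $\mathfrak{U}$ explicitly; the content is the same.
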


To prove Theorem \ref{main thm 2}, we make the following reduction. For each $k \geq 1$, let $\C_k$ denote the full subcategory of $D^b_c(U_k)$ consisting of complexes which satisfy axiom [AX1$'$] on $U_k$. If $S \in \C_k$, then $S$ is a complex on 
$$U_k = W_k \sqcup \bigsqcup_{m=1}^{n-k+1}U^m_1.$$ 
Notice that $W_k$ is closed in $U_k$ and let $i^W_k:W_k \to U_k$ be the inclusion. The normalization and vanishing axioms imply that $S|_{U^m_1} \simeq \H^{-m}(S)$ is a local system. We set $\L^m \coloneqq \H^{-m}(S)$ for $1 \leq m \leq n-k+1$. Since $U_k$ is a disjoint union of $W_k$ and the $U^m_1$'s, $S$ can be expressed as 
$$S = S_{W_k} \oplus \bigoplus_{m=1}^{n-k+1} \L^m[m],$$
where $S_{W_k} = i^W_{k*}i^{W*}_k S$. We will denote the adjunction map $S \to S_{W_k}$ by $pr_1$ and the direct sum of adjunction maps $S \to \bigoplus_{m=1}^{n-k+1} \L^m[m]$ by $pr_2$. 

For any $S \in \C_k$, define $F_k(S)$ by:
\begin{equation}
F_k(S) = \tau_{\leq k-1-n}Rj_{k*}S \oplus \bigoplus_{m=1}^{n-k} \H^{-m}(S)[m].
\end{equation}
We claim that $F_k$ is a functor from $\C_k$ to $\C_{k+1}$. It suffices to show that for any $S \in \C_k$, $F_k(S) \in \C_{k+1}$. The normalization and vanishing axioms are all satisfied by definition of $F_k(S)$. Since
\begin{equation}\label{jkFk=id}
\begin{aligned}
j_k^*F_k(S) &= j_k^*\left(\tau_{\leq k-1-n}Rj_{k*}S \oplus \bigoplus_{m=1}^{n-k} \H^{-m}(S)[m]\right)\\
&= \tau_{\leq k-1-n}S \oplus \bigoplus_{m=1}^{n-k}\L^m[m]\\
&= \tau_{\leq k-1-n} \left(S_{W_k} \oplus \bigoplus_{m=1}^{n-k+1} \L^m[m] \right) \oplus \bigoplus_{m=1}^{n-k}\L^m[m]\\
&= S_{W_k} \oplus \L^{n-k+1}[n-k+1] \oplus \bigoplus_{m=1}^{n-k}\L^m[m]\\
&= S,
\end{aligned}
\end{equation}
the attaching axiom is satisfied because the attaching morphism is the composition
$$\tau_{\leq k-1-n}i_k^*Rj_{k*}S \simeq i_k^*F_k(S) \to i_k^*Rj_{k*}j_k^*F_k(S) \simeq i_k^*Rj_{k*}S.$$ The restriction functor $j_k^*$ is clearly a functor from $\C_{k+1}$ to $\C_k$. 

The key observation is that our original functor $F$ is the composition $F = F_n \circ F_{n-1} \circ \cdots \circ F_1$ and the inverse functor $G$ is the composition $G=j_1^* \circ \cdots \circ j_{n-1}^* \circ j_n^*$. Theorem \ref{main thm 2} is therefore a consequence of the following theorem.  

\begin{thm}
For $k \geq 1$, the functor $F_k$ defines an equivalence of categories between $\C_k$ and $\C_{k+1}$. The inverse functor $G_k$ is $j_k^*$.
\begin{proof}
Equation \ref{jkFk=id} shows that $j_k^*F_k = id_{\C_k}$ as a functor. We must also show that $F_kj_k^*$ is isomorphic to $id_{\C_{k+1}}$ as functors, i.e. for any $S \in \C_{k+1}$, we must construct an isomorphism $S \to F_kj_k^*S$ such that for any morphism $S \to T$ in the category $\C_{k+1}$, the diagram

\begin{center}
\begin{tikzcd}
S \arrow[r] \arrow[d]
& T \arrow[d] \\
F_kj_k^*(S) \arrow[r]
& F_kj_k^*(T)
\end{tikzcd}
\end{center}
commutes. We construct the morphism $S \to F_kj_k^*S$ as follows. Since $S \in \C_{k+1}$, $S = S_{W_{k+1}} \oplus \bigoplus_{m=1}^{n-k} \L^m[m]$. It follows that 

\begin{equation*}
\begin{aligned}
F_kj_k^*S &= \tau_{\leq k-1-n}Rj_{k*}j_k^*(S_{W_{k+1}} \oplus \bigoplus_{m=1}^{n-k} \L^m[m])  \oplus \bigoplus_{m=1}^{n-k} \L^m[m]\\
&= \tau_{\leq k-1-n}Rj_{k*}j_k^*S_{W_{k+1}}  \oplus \bigoplus_{m=1}^{n-k} \L^m[m].
\end{aligned}
\end{equation*}

The adjunction morphism gives us a morphism $S_{W_{k+1}} \to Rj_{k*}j_k^*S_{W_{k+1}}$. The vanishing axiom [AX1$'$](b) implies that $S_{W_{k+1}} \simeq \tau_{\leq k-1-n}S_{W_{k+1}}$. Therefore, we have a morphism 
$$S \xrightarrow{pr_1}S_{W_{k+1}} \simeq \tau_{\leq k-1-n}S_{W_{k+1}} \to \tau_{\leq k - 1 -n}Rj_{k*}j_k^*S_{W_{k+1}} \simeq \tau_{\leq k - 1 -n} Rj_{k*}j_k^*S.$$
We also have a morphism 
$$S \xrightarrow{pr_2} \bigoplus_{m=1}^{n-k} \L^m[m].$$
Taking the direct sum of these morphisms gives us a morphism $S \to F_kj_k^*S$.
By construction, the morphism $S \to F_kj_k^*S$ is an isomorphism over $U_k$. We need to check that it is an isomorphism over $U_{k+1} - U_k$. The attaching axiom [AX1$'$](c) implies that $$i_k^*S \to i_k^*Rj_{k*}j_k^*S$$ induces an isomorphism on cohomology sheaves for all  $a \leq k-1-n$. Thus, 
$$ i_k^*S \simeq \tau_{\leq k-1-n} i_k^*S \simeq \tau_{\leq k-1-n} i_k^*Rj_{k*}j_k^*S \simeq i_k^*F_kj_k^*S.$$

We have thus constructed an isomorphism $S \to F_kj_k^*S$. Since this morphism is constructed as a direct sum of two morphisms, we will check that each summand is a morphism of functors. Let $f:S \to T$ be a morphism in the category $\C_{k+1}$. Consider the diagram
\begin{center}
\begin{tikzcd}
S  \arrow[d, "pr_1"] \arrow[rrr, "f"] & & & T\arrow[d, "pr_1"] \\
S_{W_{k+1}}  \arrow[rrr, "f_{W_{k+1}}"] \arrow[dd, "\eta_{k+1}(S)"] \arrow[dr] & & & T_{W_{k+1}} \arrow[dd, "\eta_{k+1}(T)"] \arrow[dl] \\
& Rj_{k*}j_k^*S_{W_{k+1}} \arrow[r] & Rj_{k*}j_k^*T_{W_{k+1}}\\
\tau_{\leq k-1-n}Rj_{k*}j_k^*S_{W_{k+1}} \arrow[rrr,"g_{W_{k+1}}"] \arrow[ur] & & & \tau_{\leq k-1-n}Rj_{k*}j_k^*T_{W_{k+1}} \arrow[ul, "\theta"]
\end{tikzcd}
\end{center}
where $g_{W_{k+1}} = \tau_{\leq k - 1 -n} Rj_{k*}j_k^*(f_{W_{k+1}})$. It is clear that the top square commutes and the two trapezoids commute. The left and right triangles commute by the truncation distinguished triangle. The commutativity of the top and bottom trapezoids combined with the commutativity of the left and right triangles imply that 
$$\theta \circ \eta_{k+1}(T) \circ f_{W_{k+1}} = \theta \circ g_{W_{k+1}} \circ \eta_{k+1}(S).$$
Since $S_{W_{k+1}} \simeq \tau_{\leq k-1-n}S_{W_{k+1}}$ and $\theta$ induces isomorphisms on cohomology sheaves for all $a \leq k-1-n$, Proposition \ref{morph lifting prop} implies that 
$$\eta_{k+1}(T) \circ f_{W_{k+1}} = g_{W_{k+1}} \circ \eta_{k+1}(S).$$
It follows that the bottom rectangle commutes. Commutativity of the upper and lower rectangles implies that the largest rectangle commutes. Since the diagram
\begin{center}
\begin{tikzcd}
S \arrow[d,"pr_2"] \arrow[r, "f"] &T \arrow[d,"pr_2"]\\
\bigoplus_{m=1}^{n-k} \L_S^m[m] \arrow[r] &\bigoplus_{m=1}^{n-k} \L_T^m[m]
\end{tikzcd}
\end{center}
commutes, we conclude that the isomorphism $id_{\C_{k+1}} \to F_kj_k^*$ is an isomorphism of functors.
\end{proof}
\end{thm}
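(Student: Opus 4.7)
The plan is to verify the equivalence in the standard way: exhibit natural isomorphisms $j_k^* \circ F_k \simeq \mathrm{id}_{\C_k}$ and $F_k \circ j_k^* \simeq \mathrm{id}_{\C_{k+1}}$. The first composition is essentially formal. Since $j_k$ is an open inclusion, $j_k^* Rj_{k*} \simeq \mathrm{id}$, and starting from the decomposition $S \simeq S_{W_k} \oplus \bigoplus_{m=1}^{n-k+1}\L^m[m]$ for $S \in \C_k$, the vanishing axiom forces $S_{W_k}$ to live in cohomological degrees $\leq k-2-n$, while $\L^m[m]$ sits in degree $-m$. A direct degree count then yields $\tau_{\leq k-1-n}S = S_{W_k} \oplus \L^{n-k+1}[n-k+1]$, so that $j_k^* F_k(S)$ reassembles to $S$ on the nose.

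For the other direction, given $T \in \C_{k+1}$ with decomposition $T \simeq T_{W_{k+1}} \oplus \bigoplus_{m=1}^{n-k}\L^m[m]$, I would construct a candidate $\eta_T : T \to F_k(j_k^* T)$ as a direct sum of two morphisms. On the local-system summands use the identity, since the same local systems reappear verbatim as the second summand of $F_k(j_k^* T)$. On the summand $T_{W_{k+1}}$ use the composite
$$T \xrightarrow{pr_1} T_{W_{k+1}} \xrightarrow{\sim} \tau_{\leq k-1-n}T_{W_{k+1}} \to \tau_{\leq k-1-n} Rj_{k*}j_k^*T_{W_{k+1}},$$
where the middle isomorphism comes from the vanishing axiom on $W_{k+1}$ and the last map is the truncation of the adjunction morphism.

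To check $\eta_T$ is an isomorphism, I would localize. Over $U_k$ it restricts to the first equivalence already verified. Over $U_{k+1}-U_k$ both local-system summands vanish because those systems are supported on $\bigsqcup_{m=1}^{n-k} U^m_1 \subseteq U_k$ (using Lemma \ref{non open strat lem} to see that $U_{k+1}-U_k$ consists of non-open codimension-$k$ strata). The question therefore reduces to showing that $i_k^* T_{W_{k+1}} \to i_k^* \tau_{\leq k-1-n} Rj_{k*}j_k^* T_{W_{k+1}}$ is an isomorphism. Since $U_{k+1}-U_k \subseteq W_{k+1}$, the vanishing axiom forces $i_k^* T_{W_{k+1}}$ to be concentrated in degrees $\leq k-1-n$, and the attaching axiom [AX1$'$](c) supplies cohomological isomorphisms in exactly those degrees, so the map is an isomorphism.

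The step I expect to be most delicate is proving naturality of $\eta$. For a morphism $f: S \to T$ in $\C_{k+1}$, the compatibility square decomposes via the direct sum into the local-system square (trivial) and the $W_{k+1}$-square. Commutativity of the latter is asserted in $D^b_c(U_{k+1})$ after passing through a truncation of the adjunction, and morphisms in the derived category do not in general lift uniquely through truncations. I would resolve this by invoking Proposition \ref{morph lifting prop}: since $S_{W_{k+1}}$ lies in degrees $\leq k-1-n$ (by vanishing) and the truncated adjunction map $S_{W_{k+1}} \to \tau_{\leq k-1-n} Rj_{k*}j_k^* S_{W_{k+1}}$ induces cohomology isomorphisms in that same range (by vanishing on $U_k$ together with attaching on $U_{k+1}-U_k$), the induced map on $\mathrm{Hom}$-sets out of $S_{W_{k+1}}$ is a bijection. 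This forces any two candidate lifts of the square to coincide, which pins down naturality and completes the proof.
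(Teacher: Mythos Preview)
Your proposal is correct and follows essentially the same route as the paper: establish $j_k^*F_k \simeq \mathrm{id}_{\C_k}$ via the degree count in Equation~\ref{jkFk=id}, build $\eta_T$ as the direct sum of the identity on the local-system summands and the truncated adjunction on $T_{W_{k+1}}$, check it is an isomorphism locally using vanishing and attaching, and settle naturality with Proposition~\ref{morph lifting prop}. The only minor divergence is in how the proposition is invoked for naturality: the paper applies it to the canonical map $\theta: \tau_{\leq k-1-n}Rj_{k*}j_k^*T_{W_{k+1}} \to Rj_{k*}j_k^*T_{W_{k+1}}$ (showing the two sides of the square agree after composing with $\theta$, hence agree), whereas you phrase it in terms of $\eta_S$ inducing a bijection on $\mathrm{Hom}$-sets out of $S_{W_{k+1}}$; your formulation needs one more sentence to pin down exactly which bijection forces the square to commute, but the substance is the same.
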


\subsection{Axioms [AX2$'$]}

In this section, we give a stratification independent collection of axioms characterizing $IC(\mathfrak{X},\L)$. Let $X$ be a complex algebraic variety of complex dimension $n$.

\begin{defn} \label{AX$2'$}
Suppose that $S$ is $\mathfrak{X}$-clc for some stratification $\mathfrak{X}$ of $X$. We say that $S$ satisfies axioms [AX2$'$] if
\begin{enumerate}[(a)]
\item (Normalization) There exists an open dense subset $V$ of $X$ such that $V = \bigsqcup_{m=1}^n V^m$ where $V^m$ is a topological manifold of complex dimension $m$, dim$_\CC(\overline{V^m} - V^m) \leq m-1$, and there exist local systems $\L^m$ on $V^m$ such that $S|_{V^m} \simeq \L^m[m]$
\item (Pure Dimensional Support) For $1 \leq m \leq n$, if $a > -m$, $$\text{dim}_\CC\{x \in \overline{V^m} \ | \ \H^a(i_x^*S) \neq 0\} < -a.$$ 
\item (Pure Dimensional Cosupport) For $1 \leq m \leq n$, if $a < m$, $$\text{dim}_\CC\{x \in \overline{V^m} \ | \ \H^a(i_x^!S) \neq 0\} < a.$$ 
\end{enumerate}
where $i_x : \{x\} \to X$ is the inclusion.
\end{defn}

\begin{rmk}\label{ax2' vs ax2}
The stratification independent axioms [AX2$'$] are analogous to axioms [AX2] proposed by Goresky and MacPherson in \cite{GM2}; see Definition \ref{AX2} for axioms [AX2]. When $X$ is a pure dimensional complex algebraic variety, axioms [AX2$'$] reduce to axioms [AX2]. Again, we do not include an analog of the lower bound axiom because it is not needed to characterize the complex $IC(\mathfrak{X},\L)$. The normalization axiom [AX2$'$](a) differs from [AX2](a) in that the open dense set $V$ contains manifolds of differing dimensions. We require that each local system is shifted by the complex dimension of the manifold. The pure dimensional support axiom [AX2$'$](b) differs from [AX2](b) in a significant way. Instead of looking at all possible stalks of the complex $S$, we look at stalks of $S$ in a specific $V^m$. For each $m$, we place a condition on the vanishing of cohomology of these stalks in certain degrees. The specific degrees subject to our conditions depend on $m$ instead of the dimension of the complex algebraic variety. The difference between [AX2$'$](c) and [AX2](c) is similar to the difference between [AX2$'$](b) and [AX2](b).  

We also make a remark on the assumption that $S$ is $\mathfrak{X}$-clc. In \cite{GM2}, it is assumed that $S$ is topologically constructible, i.e. the cohomology sheaves of $S$ also have finitely generated stalks. The finite generation of the stalks of the cohomology sheaves is a consequence of the axioms by Corollary \ref{main cor 1} and the following proposition. 
\end{rmk}

\begin{prop} \label{main prop}
Let $X$ be a complex algebraic variety of complex dimension $n$ and let $\mathfrak{X}$ be a stratification of $X$ by closed subvarieties. Suppose that $S$ is $\mathfrak{X}$-clc. Then $S$ satisfies [AX1$'$] with respect to $\mathfrak{X}$ if and only if $S$ satisfies [AX2$'$]. 
\end{prop}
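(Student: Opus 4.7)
The plan is to prove the two implications separately. For the forward direction, Corollary~\ref{main cor 1} and Proposition~\ref{IC is direct sum} identify $S$ with $\bigoplus_m a^m_*IC(\mathfrak{X}^m,\mathcal{L}^m)$. Taking $V=U_1$ and $V^m=U^m$ makes [AX2$'$](a) immediate. For [AX2$'$](b), fix $m$ and $x \in \overline{V^m}=X^m$: then $\H^a(i_x^*S)=\bigoplus_l \H^a(i_x^*IC(\mathfrak{X}^l,\mathcal{L}^l))$, and the $l$-th summand vanishes unless $x \in X^m \cap X^l$. The key geometric observation is that $U^m \cap X^l = \varnothing$ for $l \neq m$ (since $U^m$ is open, $U^m \cap U^l = \varnothing$, and $X^l = \overline{U^l}$), so $X^m \cap X^l \subseteq \partial X^m \cap \partial X^l$ has complex dimension at most $\min(m,l)-1$. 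A short case analysis in the sub-ranges $a>-l$ and $a \leq -l$, combined with the classical support axiom for each pure-dimensional $IC(\mathfrak{X}^l,\mathcal{L}^l)$ on $X^l$, confirms that $\{x \in X^m \mid \H^a(i_x^*S) \neq 0\}$ has complex dimension $<-a$ for every $a>-m$. The cosupport [AX2$'$](c) is treated analogously, using $i_x^!(a^l_*IC(\mathfrak{X}^l,\mathcal{L}^l)) = (\tilde{i}_x^l)^!IC(\mathfrak{X}^l,\mathcal{L}^l)$ (from $(a^l)^!a^l_*=\mathrm{id}$ for the closed inclusion $a^l$) together with the pure-dimensional cosupport of each $IC(\mathfrak{X}^l,\mathcal{L}^l)$.

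For the backward direction, suppose $S$ is $\mathfrak{X}$-clc and satisfies [AX2$'$]. To get [AX1$'$](a), one first identifies $\overline{V^m}$ with $X^m$ using Corollary~\ref{irred comp cor}; then $V^m \cap U^m$ is open dense in each connected component of $U^m$, and since $S|_{V^m \cap U^m} \simeq \mathcal{L}^m[m]$ and $S$ is $\mathfrak{X}$-clc, the locally constant cohomology sheaves on each component force $\H^a(S)|_{U^m}=0$ for $a \neq -m$ and $\H^{-m}(S)|_{U^m}$ to be a local system extending $\mathcal{L}^m|_{V^m \cap U^m}$. For [AX1$'$](b), fix $x \in W_{k+1}$: by construction $x \in X^m-X_{n-k-1}$ for some $m \geq n-k+1$, so the stratum $\Sigma$ of $X$ containing $x$ has $\dim_\CC \Sigma \geq n-k$. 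For $a>k-1-n$ one has $a>-m$, so [AX2$'$](b) yields $\dim_\CC\{y \in X^m \mid \H^a(i_y^*S) \neq 0\} < -a \leq n-k$. Because $S$ is $\mathfrak{X}$-clc, this set is a union of strata and cannot contain $\Sigma$, in particular cannot contain $x$; thus $\H^a(i_x^*S)=0$.

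For [AX1$'$](c), I would use the equivalent formulation [AX1$'$](c$''$) established in Section 4.2: it suffices to show $\H^a(i_x^!S)=0$ for $x \in U_{k+1}-U_k$ and $a \leq n-k$. By Lemma~\ref{non open strat lem}, such $x$ lies in a non-open $(n-k)$-dimensional stratum $\Sigma$. For any $m$ with $x \in X^m$ one must have $m>n-k$, since otherwise $\Sigma \subseteq X^{n-k}$ would force $\Sigma$ to be either an open $(n-k)$-dimensional stratum (contradicting ``non-open'') or to lie in $\partial X^{n-k}$ of lower complex dimension (contradicting $\dim_\CC \Sigma=n-k$); hence $a \leq n-k < m$, and [AX2$'$](c) gives $\dim_\CC\{y \in X^m \mid \H^a(i_y^!S) \neq 0\} < a \leq n-k$. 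The same union-of-strata versus dimension argument concludes, provided $\H^a(i_y^!S)$ is constant on $\Sigma$; this follows by applying Lemma~\ref{projection commute}(b) inside a distinguished neighborhood $N \simeq \RR^{2(n-k)} \times \mathrm{cone}^o(L)$ of $y \in \Sigma$ and using the $\mathfrak{X}$-clc property of $S$ to write $S|_N$ as a pullback from the cone factor. The hardest step of the whole argument is exactly this backward direction: one must align the abstract decomposition $V=\bigsqcup V^m$ of [AX2$'$](a) with the stratification-induced $U_1=\bigsqcup U^m$ via $\overline{V^m}=X^m$, and then establish stratum-constancy of costalk cohomology, which is not immediate from $\mathfrak{X}$-clc and requires the local cone structure.
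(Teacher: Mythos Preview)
Your proof is correct, and your backward direction is essentially the paper's argument: use Lemma~\ref{4a=5a} for normalization, then for both the vanishing and attaching axioms show that the relevant (co)support sets are unions of strata and run a dimension count against the stratum containing $x$. Your extra care in justifying stratum-constancy of $\H^a(i_x^!S)$ via the cone-neighborhood structure is a point the paper handles only by citing Remark~\ref{union of strat}.

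Your forward direction, however, is genuinely different from the paper's. The paper argues directly from the axioms: assuming [AX1$'$](b), a stratum in $\{x \in X^m \mid \H^a(i_x^*S)\neq 0\}$ cannot meet $W_{k+1}$ for $a>k-1-n$, and a short analysis of which $U_{k+1}-U_k$ remain available forces the dimension bound; the cosupport is handled analogously via [AX1$'$](c$''$). You instead invoke Corollary~\ref{main cor 1} and Proposition~\ref{IC is direct sum} to identify $S$ with $\bigoplus_l a^l_*IC(\mathfrak{X}^l,\L^l)$, then reduce to the classical [AX2] axioms for each pure-dimensional summand together with the geometric bound $\dim_\CC(X^m\cap X^l)\leq \min(m,l)-1$ for $l\neq m$. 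This is perfectly valid (no circularity, since both ingredients are proved earlier) and arguably more conceptual: it makes transparent that [AX2$'$] is a repackaging of the pseudomanifold axioms on each $X^m$, with the cross-terms controlled by the smallness of the pairwise intersections. The paper's route is more self-contained in that it never appeals to the uniqueness theorem or to the pure-dimensional theory, and it proves both implications in a symmetric way; yours trades that symmetry for a cleaner reduction to known results.
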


Before proving the proposition, we will need to establish several lemmas. Let $\mathfrak{X}$ be a topological stratification of $X$. Recall that $U^m$ is the union of all open $m$-dimensional strata of $X$ and $X^m$ is defined to be $\overline{U^m}$. Let $W^m$ be the largest set of points in $X$ which admit a neighborhood homeomorphic to $\CC^{m}$. We can equivalently think of $W^m$ as the largest open subset of $X$ which is a topological manifold of complex dimension $m$.

\begin{lem} \label{strat contain}
With the notation above, we have $U^m \subseteq W^m$.
\begin{proof}
Let $p \in U^m$. Let $S^m_p \subseteq X_m - X_{m-1}$ be the open complex $m$-dimensional stratum containing $p$. By definition of topologically stratified space, there exists a neighborhood $N_p$ and a real $(2(n-m)-1)$-dimensional topologically stratified space $L$ such that $N_p \simeq \CC^m \times cone^o(L)$. Recall from the definition of stratified space that the stratification of $L$ induces one on $N_p$. Since $S^m_p$ is open, we can take $N_p \subseteq S^m_p \subseteq X_m$. It follows that $N_p = N_p \cap X_m \simeq \CC^m \times cone^o(L_{-1}) = \CC^m$. So $p \in W^m$.
\end{proof}
\end{lem}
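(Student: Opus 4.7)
The plan is to show directly that every $p \in U^m$ admits an open neighborhood in $X$ that is homeomorphic to $\CC^m$, which is exactly what it means for $p$ to belong to $W^m$. Fix such a $p$. By definition of $U^m$, the point $p$ lies in some open complex $m$-dimensional stratum $S \subseteq X_m - X_{m-1}$ (i.e., a connected component of $X_m - X_{m-1}$ that happens to be open in $X$). The obvious difficulty is that although $S$ itself is a complex manifold of dimension $m$, I need the manifold structure to extend to an open neighborhood of $p$ in $X$, not merely in $S$. The only tool for accessing such local structure is the distinguished neighborhood provided by Definition \ref{top strat space def}.

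Accordingly, I would apply that definition at $p$ to produce a distinguished neighborhood $N_p$ of $p$, a compact stratified link $L$ (of real dimension $2(n-m)-1$), and a homeomorphism
$$\phi : \CC^m \times \mathrm{cone}^o(L) \xrightarrow{\sim} N_p$$
that sends $\CC^m \times \mathrm{cone}^o(L_j)$ onto $N_p \cap X_{m+j+1}$ for every $j$. Since $S$ is open in $X$, I may shrink $N_p$ so that $N_p \subseteq S$; this forces $N_p \subseteq X_m - X_{m-1}$, and in particular $N_p \cap X_m = N_p$. Transporting this across $\phi$ and using the convention $\mathrm{cone}^o(\varnothing) = \{\mathrm{pt}\}$ (applied to $L_{-1} = \varnothing$), I obtain
$$N_p \;=\; N_p \cap X_m \;\simeq\; \CC^m \times \mathrm{cone}^o(L_{-1}) \;=\; \CC^m \times \{\mathrm{pt}\} \;\simeq\; \CC^m.$$
Since $N_p$ is open in $X$, this realizes $p$ as a point admitting a $\CC^m$-neighborhood, so $p \in W^m$.

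There is no genuinely hard step here; the only thing to watch is the indexing convention, namely that for complex algebraic varieties the filtration is indexed by complex dimension, so the Euclidean factor in the distinguished neighborhood is $\CC^m \cong \RR^{2m}$ rather than $\RR^m$. Once the definitions are unwound, the content of the argument is simply the observation that openness of $S$ in $X$ lets me trap a distinguished neighborhood entirely inside a single top-dimensional piece of its own local model, collapsing the cone factor to a point.
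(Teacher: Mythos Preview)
Your proof is correct and follows essentially the same approach as the paper's: pick a distinguished neighborhood at $p$, shrink it inside the open stratum $S$, and use the compatibility $N_p \cap X_m \simeq \CC^m \times \mathrm{cone}^o(L_{-1}) = \CC^m$ to conclude. Your write-up is in fact slightly more careful about the conventions (the $\mathrm{cone}^o(\varnothing)$ point and the openness of $N_p$ in $X$), but the argument is the same.
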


\begin{lem}
Suppose that $V$ is any open dense subset of $X$ consisting of points in $X$ which admit a neighborhood homeomorphic to some $\CC^{m}$. Write $V = \bigsqcup_{m=1}^m V^m$ where $V^m$ is a topological manifold of complex dimension $m$. Then $\overline{V^m} = X^m$.
\begin{proof}
We will show that $\overline{V^m}$ and $X^m$ are both equal to $\overline{W^m}$. We first show that $X^m = \overline{W^m}$. Lemma \ref{strat contain} implies that $X^m \subseteq \overline{W^m}$. We now show that $W^m \subseteq X^m$. Let $p \in W^m$ and suppose that $p \notin X^m$. Since $p \in W^m$, there exists a distinguished neighborhood $N_p$ of $p$ homeomorphic to $\CC^m$. Since $X = \bigcup_{l=1}^n X^l$, $p \in X^l$ for some $l \neq m$. Since $X^l = \overline{U^l}$, $N_p \cap U^l$ must be nonempty. Let $q \in N_p \cap U^l$. Since $q \in N_p$, $q$ admits a neighborhood homeomorphic to $\CC^m$. Since $q \in U^l$, Lemma \ref{strat contain} implies that $q$ admits a neighborhood homeomorphic to $\CC^l$. This is a contradiction because $l \neq m$. It follows that $p \in X^m$.
\par
The proof that $\overline{V^m} = \overline{W^m}$ is similar.
\end{proof}
\end{lem}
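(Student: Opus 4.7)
The plan is to mimic the structure of the preceding lemma by proving that both $\overline{V^m}$ and $X^m$ equal $\overline{W^m}$, where $W^m$ is the largest open subset of $X$ that is a topological manifold of complex dimension $m$. Since the preceding lemma already establishes $X^m = \overline{W^m}$, the entire task reduces to showing $\overline{V^m} = \overline{W^m}$, which I will do by a double inclusion.

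First I would show $V^m \subseteq W^m$. Every point $p \in V^m$ lies in the open subset $V$ of $X$ and admits a neighborhood in $X$ homeomorphic to $\CC^m$ by the hypothesis on $V$ together with topological invariance of dimension (which forces the ambient local dimension at $p$ to match the local dimension $m$ of $V^m$ as a manifold). Hence $p \in W^m$ by definition of $W^m$, and taking closures gives $\overline{V^m} \subseteq \overline{W^m}$.

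The reverse inclusion $\overline{W^m} \subseteq \overline{V^m}$ reduces to proving $W^m \subseteq \overline{V^m}$. Given $p \in W^m$, fix a neighborhood $N_p$ of $p$ homeomorphic to $\CC^m$. For any open neighborhood $N'$ of $p$, the intersection $N' \cap N_p$ is still an open neighborhood of $p$ homeomorphic to an open subset of $\CC^m$, and since $V$ is dense in $X$, we may choose $q \in (N' \cap N_p) \cap V$. Such a $q$ belongs to some $V^l$, so it simultaneously admits a neighborhood homeomorphic to an open subset of $\CC^m$ (from $N' \cap N_p$) and one homeomorphic to $\CC^l$ (since $V^l$ is a topological manifold of complex dimension $l$). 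Topological invariance of dimension forces $l = m$, so $q \in V^m \cap N'$. Thus every neighborhood of $p$ meets $V^m$, giving $p \in \overline{V^m}$.

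Combining the two inclusions yields $\overline{V^m} = \overline{W^m} = X^m$, finishing the proof. The only nontrivial input is invariance of topological dimension under local homeomorphisms, which is completely standard; the main conceptual point, as in the preceding lemma, is that the set $W^m$ acts as a canonical, stratification-free intermediary between the stratification-defined $U^m$ and the arbitrary choice of $V^m$. I do not anticipate any real obstacle beyond applying invariance of dimension carefully.
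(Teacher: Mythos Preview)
Your approach is exactly the paper's: show both $\overline{V^m}$ and $X^m$ coincide with $\overline{W^m}$, using density together with invariance of dimension. One small correction: the preceding lemma only gives $U^m \subseteq W^m$, hence $X^m = \overline{U^m} \subseteq \overline{W^m}$, not the full equality $X^m = \overline{W^m}$ that you invoke. The paper establishes the reverse inclusion $W^m \subseteq X^m$ within the present proof, by precisely the density-plus-dimension argument you wrote out for $W^m \subseteq \overline{V^m}$, applied with the open dense set $U_1 = \bigsqcup_m U^m$ in the role of $V$; so your own argument, run once more with $V$ replaced by $U_1$, supplies the missing half.
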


\begin{cor}\label{irred comp cor}
Let $X$ be a complex algebraic variety of complex dimension $n$ with stratification $\mathfrak{X}$. Let $U^m$ be the union of all open $m$-dimensional strata. Then $\overline{U^m}$ is the union of all $m$-dimensional irreducible components of $X$. 
\begin{proof}
To see this, let $\tilde{X}^m$ be the union of all $m$-dimensional irreducible components of $X$. Let $V^m$ be the smooth locus of $\tilde{X}^m - \left( \bigcup_{l \neq m} \tilde{X}^l \cap \tilde{X}^m \right)$. Then $V = \bigsqcup_{m=1}^n V^m$ is an open dense subset of $X$ consisting of points which admit a neighborhood homeomorphic to $V^m$. It then follows from the previous lemma that $\tilde{X^m} = \overline{U^m}$.
\end{proof}
\end{cor}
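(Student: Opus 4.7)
The plan is to exhibit an explicit open dense subset $V = \bigsqcup_{m=1}^n V^m$ of $X$ satisfying the hypotheses of the preceding lemma (the one directly above the corollary), and then read off the claim from that lemma. Write $\tilde{X}^m$ for the union of all irreducible components of $X$ of complex dimension exactly $m$, so that $X = \bigcup_{m=1}^n \tilde{X}^m$ and each $\tilde{X}^m$ is a closed subvariety of pure complex dimension $m$. I would then define $V^m$ to be the complex algebraic smooth locus of $\tilde{X}^m \setminus \bigcup_{l \neq m}(\tilde{X}^l \cap \tilde{X}^m)$ and set $V = \bigsqcup_{m=1}^n V^m$.

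The first step is to check that $V$ is open in $X$, that each $V^m$ is a complex manifold of complex dimension $m$, and that every point of $V^m$ admits a Euclidean neighborhood homeomorphic to $\CC^m$. The key identity here is
\[
\tilde{X}^m \setminus \bigcup_{l \neq m}(\tilde{X}^l \cap \tilde{X}^m) \;=\; X \setminus \bigcup_{l \neq m} \tilde{X}^l,
\]
which follows because $X = \bigcup_{l} \tilde{X}^l$: any point of the right-hand side lies in some $\tilde{X}^l$, and the condition forces $l = m$. Since $\bigcup_{l \neq m} \tilde{X}^l$ is closed in $X$, this set is open in $X$; the complex algebraic smooth locus of a pure $m$-dimensional subvariety is further open and a complex manifold of complex dimension $m$, hence locally homeomorphic to $\CC^m$.

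The second step is density. The singular locus of $\tilde{X}^m$ and each intersection $\tilde{X}^l \cap \tilde{X}^m$ (with $l \neq m$) is a proper closed subvariety of the pure-dimensional $\tilde{X}^m$, hence of complex dimension strictly less than $m$; therefore $V^m$ is open dense in $\tilde{X}^m$. Taking the union over $m$ and using $X = \bigcup_m \tilde{X}^m$ shows that $V$ is dense in $X$. At this point all hypotheses of the preceding lemma are in place, so applying that lemma gives $\overline{V^m} = X^m = \overline{U^m}$. On the other hand, the density just established together with the closedness of $\tilde{X}^m$ in $X$ yields $\overline{V^m} = \tilde{X}^m$. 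Combining the two equalities gives $\overline{U^m} = \tilde{X}^m$, which is exactly the statement of the corollary.

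The main thing to get right is the geometric bookkeeping of the first paragraph, in particular the identity above that promotes the a priori locally closed set $\tilde{X}^m \setminus \bigcup_{l \neq m}(\tilde{X}^l \cap \tilde{X}^m)$ to an actually open subset of $X$; without this, the hypotheses of the previous lemma would not apply. Once that point and the dimension-drop argument justifying density are noted, the corollary reduces to a direct invocation of the preceding lemma with no further content.
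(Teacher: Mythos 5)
Your proposal is correct and follows essentially the same route as the paper: take $V^m$ to be the smooth locus of $\tilde{X}^m$ minus the intersections with the other-dimensional components, verify $V=\bigsqcup_m V^m$ is open dense with points locally homeomorphic to $\CC^m$, and apply the preceding lemma. You merely spell out the openness, density, and the identification $\overline{V^m}=\tilde{X}^m$ that the paper leaves implicit.
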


\begin{lem} \label{4a=5a}
Let $S$ be an $\mathfrak{X}$-clc complex. Then $S$ satisfies [AX1$'$](a) if and only if $S$ satisfies [AX2$'$](a) . 
\begin{proof}
If $S$ satisfies [AX1$'$](a) with respect to $\mathfrak{X}$, then the open set $U_1$ coming from the stratification also satisfies the requirements in [AX2$'$](a). Now let $S$ be $\mathfrak{X}$-clc and suppose $S$ satisfies [AX2$'$](a). Since $S$ is $\mathfrak{X}$-clc, $S|_{U^m}$ is CLC. In particular, all of  the cohomology sheaves $\H^a(S)|_{U^m}$ are locally constant. For $a \neq -m$, [AX2$'$](a) implies that $\H^a(S)|_{U^m \cap V^m} = 0$. Since $\H^a(S)|_{U^m}$ is locally constant and its restriction to $U^m \cap V^m$ is $0$, we conclude that $\H^a(S)|_{U^m} = 0$. This proves the lemma. 
\end{proof}
\end{lem}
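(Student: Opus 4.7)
By Lemma \ref{4a=5a}, the equivalence of the normalization axioms [AX1$'$](a) and [AX2$'$](a) is already in hand. The task therefore reduces to showing, under the $\mathfrak{X}$-clc hypothesis and assuming the normalization, that the vanishing and attaching conditions [AX1$'$](b),(c) together are equivalent to the pure dimensional support and cosupport conditions [AX2$'$](b),(c). The plan is to convert both sides into pointwise vanishing conditions indexed by the strata of $\mathfrak{X}$ and then match the resulting ranges directly.

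First I would replace [AX1$'$](c) with its equivalent pointwise form [AX1$'$](c$''$) derived earlier in the section, which asserts that $\mathcal{H}^a(i_x^!S)=0$ for $x \in U_{k+1}-U_k$ and $a \leq n-k$. Axiom [AX1$'$](b) is already pointwise: for every $x \in X^m - X_{n-k-1}$ with $m \geq n-k+1$, $\mathcal{H}^a(S)_x = 0$ for $a > k-1-n$. On the [AX2$'$] side, the $\mathfrak{X}$-clc hypothesis ensures that each cohomology sheaf of $i^*_{(-)}S$ and $i^!_{(-)}S$ is locally constant on strata, so the support and cosupport sets appearing in [AX2$'$](b),(c) are unions of strata of $X^m$. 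Having complex dimension strictly less than $-a$ (resp.\ $a$) is then equivalent to being contained in $X_{-a-1}$ (resp.\ $X_{a-1}$), so [AX2$'$](b),(c) translate into pointwise vanishing: $\mathcal{H}^a(i_x^*S)=0$ for $x \in X^m - X_{-a-1}$ with $a > -m$, and $\mathcal{H}^a(i_x^!S)=0$ for $x \in X^m - X_{a-1}$ with $a < m$.

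I would then verify the equivalences by a stratum-by-stratum comparison. By Corollary \ref{irred comp cor} and the axiom of the frontier, any point $x$ lying in the stratum $X_j - X_{j-1}$ and in $X^m$ satisfies $j \leq m$, with equality precisely when $x$ lies in the open stratum $U^m$. For [AX1$'$](b)$\Leftrightarrow$[AX2$'$](b): the extremal admissible choice $k = \max(n-m+1, n-j)$ in [AX1$'$](b) gives $\mathcal{H}^a(S)_x = 0$ for $a \geq -j$ when $j < m$, while the case $j = m$ is controlled directly by the normalization; conversely [AX2$'$](b), applied for every $m$ containing the stratum through $x$, imposes the same vanishing range. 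For [AX1$'$](c$''$)$\Leftrightarrow$[AX2$'$](c): one direction uses Lemma \ref{non open strat lem} to realize any $x \in U_{k+1}-U_k$ as a point in a non-open $(n-k)$-dimensional stratum, which by the frontier lies in some $X^m$ with $m \geq n-k+1$; since $a \leq n-k < m$, [AX2$'$](c) applies and gives the required costalk vanishing. The reverse direction is a case split: for $x$ in a non-open stratum of dimension $j \geq a$ one applies [AX1$'$](c$''$) with $k = n-j$, while for $x$ in an open stratum of $X^m$ (necessarily of dimension $m$) the normalization together with Proposition \ref{top man} gives $i_x^!S \simeq \mathcal{L}^m_x[-m]$, which has vanishing cohomology in every degree $a < m$.

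The main obstacle is careful bookkeeping of the numerical inequalities to ensure that the filtration index $k$ can always be chosen compatibly with the dimensional data $(m,j)$ of the stratum through $x$, and in the converse direction of (c) separately handling points in open strata, which sit outside the scope of [AX1$'$](c$''$) but are pinned down by the normalization.
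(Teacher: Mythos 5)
There is a fundamental problem: your proposal does not prove the statement at hand. The statement to be proved \emph{is} Lemma \ref{4a=5a}, namely the equivalence of the two normalization axioms [AX1$'$](a) and [AX2$'$](a) for an $\mathfrak{X}$-clc complex. Your very first sentence invokes ``Lemma \ref{4a=5a}'' to declare this equivalence ``already in hand,'' which is circular, and the remainder of your argument is devoted to the equivalence of the vanishing/attaching axioms [AX1$'$](b),(c) with the pure dimensional support/cosupport axioms [AX2$'$](b),(c). That is the content of Proposition \ref{main prop} (proved later in the paper, and indeed along lines similar to what you sketch: reduce to pointwise vanishing via [AX1$'$](c$''$), use $\mathfrak{X}$-clc to see the support/cosupport loci are unions of strata, and compare dimension ranges stratum by stratum). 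So while your outline is a reasonable sketch of that proposition, nothing in it addresses the claim you were asked to prove.

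What the lemma actually requires is the following. The forward direction is immediate: if $S$ satisfies [AX1$'$](a), then $U_1 = \bigsqcup_m U^m$ itself serves as the open dense set $V$ in [AX2$'$](a). For the converse, one must use the $\mathfrak{X}$-clc hypothesis together with [AX2$'$](a): since $S|_{U^m}$ is CLC, each cohomology sheaf $\H^a(S)|_{U^m}$ is locally constant; by the lemma preceding this one, $\overline{V^m} = X^m = \overline{U^m}$, so $U^m \cap V^m$ is dense in $U^m$ (its complement has complex dimension $\leq m-1$), and [AX2$'$](a) forces $\H^a(S)|_{U^m \cap V^m} = 0$ for $a \neq -m$; a locally constant sheaf vanishing on a dense open subset of every component vanishes, so $\H^a(S)|_{U^m} = 0$ for $a \neq -m$, and hence $S|_{U^m} \simeq \H^{-m}(S)|_{U^m}[m]$ with $\H^{-m}(S)|_{U^m}$ a local system, which is exactly [AX1$'$](a). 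None of these steps appear in your proposal, so as written it has a complete gap with respect to the stated lemma.
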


\begin{rmk} \label{local sys rmk}
Let $S$ be a $\mathfrak{X}$-clc complex and suppose $S$ satisfies [AX2$'$](a). Then $S|_V^m = \L^m[m]$ where $\L^m$ is a local system on the topological manifold $V^m$. By the previous lemma, the assumption that $S$ is $\mathfrak{X}$-clc implies that $S|_{U^m}\simeq \L'^m$ where $U^m$ is the open subset of $X^m$ coming from the stratification and $\L'^{m}$ is a local system on $U^m$. Since dim$_\CC(\overline{V^m}-V^m) \leq m-1$, $U^m \cap V^m$ has real codimension greater than or equal to $2$ in $U^m$. This implies that there is a surjection of fundamental groups $\pi_1(U^m \cap V^m) \twoheadrightarrow \pi_1(U^m)$. Fix a base point $x \in U^m \cap V^m$. The local system $\L'$ on $U_1$ corresponds to a representation $\phi : \pi_1(U_1, x) \to Aut(\L'_x)$ and the restriction $\L|_{U_1 \cap V}$ corresponds to a representation $\tilde{\phi}: \pi_1(U^m \cap V^m, x) \to Aut(\L^m_x)$. Since $\L'_x = S_x = \L_x$, we have a commutative diagram:
\begin{center}
\begin{tikzcd}
\pi_1(U^m \cap V^m, x) \arrow[d, twoheadrightarrow, "i_*"] \arrow[dr,"\tilde{\phi}"]\\
\pi_1(U^m, x) \arrow[r,"\phi"] &Aut(\L_x)
\end{tikzcd}
\end{center}
Surjectivity of the fundamental groups implies that $\phi$ is the unique representation making this diagram commute. To see this, let $\psi$ be another such representation. Then for any $[\gamma] \in \pi_1(U^m,x)$, surjectivity of the fundamental groups says there exists  $[\sigma] \in \pi_1(U^m \cap V^m, x)$ such that $i_*([\sigma]) = [\gamma]$. It follows that 
$$\phi([\gamma]) = \tilde{\phi}([\sigma]) = \psi([\gamma]),$$
and so $\psi = \phi$. This implies that the local system $\L'^m$ on $U^m$ is the unique extension of the local system $\L^m|_{U^m \cap V^m}$. The most important case of this is the constant sheaf. If $\L^m|_{U^m \cap V^m} \simeq R_{U^m \cap V^m}$, then the representation $\tilde{\phi}$ is trivial. Surjectivity of the fundamental groups implies that the representation $\phi$ is trivial, i.e. $\L^m|_{U^m} \simeq R_{U^m}$.

This fact is false without the surjectivity of fundamental groups. Consider the inclusion $S^1-\{p\} \to S^1$ and take any nontrivial local system on $S^1$. Then its restriction to $S^1-\{p\}$ is trivial. 
\end{rmk}

\begin{rmk} \label{union of strat}
The significance of Lemma \ref{4a=5a} is the following. If $S$ is $\mathfrak{X}$-clc and satisfies [AX2$'$], then we can replace the open set $V$ appearing in [AX2$'$](a) with the open set $U_1$ coming from the stratification. Since $S$ is $\mathfrak{X}$-clc, the sets appearing in [AX2$'$](b) and [AX2$'$](c) can be taken to be unions of strata.
\end{rmk}

We are now ready to prove Proposition \ref{main prop}.
\begin{proof}[Proof of Proposition \ref{main prop}]
Suppose $S$ is an $\mathfrak{X}$-clc complex and that $S$ satisfies [AX2$'$]. Lemma \ref{4a=5a} implies that $S$ satisfies [AX1$'$](a). We now prove that $S$ satisfies [AX1$'$](b) if and only if $S$ satisfies [AX2$'$](b). Fix $1 \leq m \leq n$ and $a > -m$. By Remark \ref{union of strat}, the set $\{x \in X^m \ | \ \H^a(i_x^*S) \neq 0\}$ is a union of strata. Suppose $S$ satisfies [AX1$'$](b). This implies that the strata contained in $\{x \in X^m \ | \ \H^a(i_x^*S) \neq 0\}$ cannot meet $W_{k+1}$ for $ a > k - 1 -n$. Hence they can only be contained in $W_{k+1}$ for $a \leq k - 1 - n$, equivalently $k \geq a + n +1$. So the strata are contained in $W_{k+1} - W_k$ for some $k \geq a + n + 1$. By Lemma \ref{Wk+1-Wk}, $U_{k+1} - U_k = (W_{k+1} - W_k) - U^{n-k+1}$. Since $k \geq a + n + 1$ and $a > -m$, we have that $n-k+1 \leq -a < m$. It follows that $U^{n-k+1}$ cannot be among the strata contained in $\{x \in X^m \ | \ \H^a(i_x^*S) \neq 0\}$. This implies that the only allowable strata are contained in $U_{k+1} - U_k$ for $n-k < -a$. It follows that dim$_\CC\{x \in X^m \ | \ \H^a(i_x^*S) \neq 0\} \leq n-k < -a$.

Conversely, suppose $S$ satisfies [AX2$'$](b). Then dim$_\CC\{x \in X^m \ | \ \H^a(i_x^*S) \neq 0\} < -a$. Since $\{x \in X^m \ | \ \H^a(i_x^*S) \neq 0\}$ is a union of strata, it can only contain strata of dimension $< -a$. These strata are contained in $U_{k+1}-U_k \subseteq W_{k+1} - W_k$ for $n-k < -a$. So these strata can only be contained in $W_{k+1}$ for $n-k+1 \geq -a$ or equivalently, $a \leq k - 1 -n$. This implies that $S|_{W_{k+1}} \simeq \tau_{\leq k - 1 -n} S|_{W_{k+1}}$. 

We now prove that $S$ satisfies [AX1$'$](c) if and only if it satisfies [AX1$'$](c). Fix $1 \leq m \leq n$ and $a < m$. Again by Remark \ref{union of strat}, the set $\{x \in X^m \ | \ \H^a(i_x^!(S) \neq 0\}$ is a union of strata. If $x \in U^m$, then by factoring the inclusion $i_x: \{x\} \to X$ as 
\begin{center}
\begin{tikzcd}
\{x\} \arrow[r, "i_x"] \arrow[d, "\mu_x"]&X\\
U^m \arrow[ru, "j^m" ']
\end{tikzcd}
\end{center}
we see that $i^!S = \mu_x^! S|_{U^m_1} =\mu_x^*\L^m[-m]$. Since $a < m$, we see that $\{x \in X^m \ | \ \H^a(i_x^!(S) \neq 0\}$ does not contain any open $m$-dimensional strata. 

Now, suppose $S$ satisfies [AX2$'$](c), then $S$ also satisfies [AX1$'$](c$''$). In particular, this implies that these strata cannot meet $U_{k+1} - U_k$ for $a \leq n-k$. Thus the only allowable strata are contained in $U_{k+1} - U_k$ for $a > n-k$. It follows that dim$_\CC\{x \in X^m \ | \ \H^a(i_x^!(S) \neq 0\} \leq n-k < a$. 

Conversely suppose $S$ satisfies [AX2$'$](c). Then dim$_\CC\{x \in X^m \ | \ \H^a(i_x^!(S) \neq 0\} < a$. Since $\{x \in X^m \ | \ \H^a(i_x^!S) \neq 0\}$ is a union of non-open strata, it can only contain strata of complex dimension $< a$. These strata are contained in $U_{k+1} - U_k$ for $a > n-k$. 
\end{proof}

\section{Topological Independence of $IC(\mathfrak{X},\L)$}\label{top indep}
The main goal of this section is to prove Theorem \ref{main thm 1}. 

\begin{thm'}
Let $X$ be a complex algebraic variety of complex dimension $n$ which is not necessarily pure dimensional. Let $U$ be an open dense subset of $X$ such that $U = \bigsqcup_{m=1}^n U^m$ where $U^m$ is a topological manifold of complex dimension $m$ and dim$_\CC(\overline{U^m}-U^m) \leq m-1$. Let $\L^m$ be a local system on $U^m$ and set $\L = \bigoplus_{m=1}^n \L^m$ (extend each $\L^m$ on $U^m$ to $U$ by zero). Then there exists a unique (up to canonical isomorphism) complex $IC(X, \mathcal{L})$ satisfying axioms [AX2$'$], i.e.  $IC(X, \mathcal{L})$ is the unique complex satisfying: 
\begin{enumerate}[(a)]
\item  (Normalization) There exists an open dense subset $V$ of $X$ such that $V = \bigsqcup_{m=1}^n V^m$ where $V^m$ is a topological manifold of complex dimension $m$, dim$_\CC(\overline{V^m}-V^m) \leq m-1$, and $IC(X,\L)|_{V^m} \simeq \L'^m[m]$ where $\L'$ is the unique extension of $\L^m|_{U^m \cap V^m}$ to $V^m$ (see Remark \ref{local sys rmk} for more details on $\L'$).
\item (Pure Dimensional Support) For $1 \leq m \leq n$, if $a > -m$, $$\text{dim}_\CC\{x \in \overline{V^m} \ | \ \H^a(i_x^*S) \neq 0\} < -a.$$ 
\item (Pure Dimensional Cosupport) For $1 \leq m \leq n$, if $a < m$, $$\text{dim}_\CC\{x \in \overline{V^m} \ | \ \H^a(i_x^!S) \neq 0\} < a.$$ 
\end{enumerate}
\end{thm'}

To prove Theorem \ref{main thm 1}, we follow the same strategy as Goresky and MacPherson in \cite{GM2}. The main difficulty is that we need some way of comparing objects in $D^b_c(X)$ satisfying [AX1$'$] with respect to two different stratifications, which may not have a common refinement. To address this, we will construct a canonical filtration $\mathfrak{X}^{can}$ such that:
\begin{enumerate}
\item each topological stratification is a refinement of $\mathfrak{X}^{can}$,
\item applying Deligne's construction with respect to $\mathfrak{X}^{can}$ yields a complex $J^{can}$ satisfying [AX2$'$],
\item $J^{can}$ is $\mathfrak{X}$-clc for any stratification $\mathfrak{X}$.
\end{enumerate}

The existence of such a complex $J^{can}$ implies Theorem \ref{main thm 1} as follows. Suppose $S$ is $\mathfrak{X}$-clc for some stratification $\mathfrak{X}$ of $X$ and $S$ satisfies [AX2$'$]. Then $S$ satisfies [AX1$'$] with respect to $\mathfrak{X}$ by Proposition \ref{main prop}. Similarly, the complex $J^{can}$ described above also satisfies [AX1$'$] with respect to $\mathfrak{X}$. By Corollary \ref{main cor 1}, $S$ and $J^{can}$ are canonically isomorphic in $D^b_c(X)$. If $T$ is the complex obtained by applying Deligne's construction to any other stratification $\mathfrak{\tilde{X}}$, then $T$ satisfies [AX1$'$] with respect to $\mathfrak{\tilde{X}}$ and satisfies [AX2$'$] by Proposition \ref{main prop}. It follows that $T$ is also canonically isomorphic to $J^{can}$. 

\subsection{Construction of the Canonical Filtration}  
We will construct the canonical filtration $\mathfrak{X}^{can}$ inductively. For each $1 \leq m \leq n$, let $W^m$ be the largest set of points in $X$ which admit a neighborhood homeomorphic to $\CC^m$ and let $X^m = \overline{W^m}$. Set $X^{can}_{n-1} = X - W^n$. Now, suppose that
\begin{equation*}
\mathfrak{X}^{can}_k: X^{can}_{n} \supseteq X^{can}_{n-1} \supseteq \cdots \supseteq X^{can}_{n-k}
\end{equation*}
has been defined and each $X^{can}_{n-l}$ is closed in $X$. Recall that the open filtration $\mathfrak{U}^{can}_k$ induced by $\mathfrak{X}^{can}_k$ is given by 
\begin{equation*}
\mathfrak{U}^{can}_k: U^{can}_1 \subseteq \cdots \subseteq U^{can}_k,
\end{equation*}
where
\begin{equation*}
U^{can}_l = \left(X^n -  X^{can}_{n-l} \right) \cup \left(X^{n-1} - X^{can}_{n-l+1}\right) \cup \cdots \cup \left(X^{n-l+2} - X^{can}_{n-2}\right) \sqcup \bigsqcup_{m=1}^{n-l+1}W^m.
\end{equation*}
Let $J^{can}_k \in D^b_c(U^{can}_k)$ be the complex obtained by applying Deligne's construction with respect to the filtration $\mathfrak{X}^{can}_k$. Let $h_k \colon U^{can}_k \to X$ be the open inclusion. Let $V'$ be the largest open subset of $X^{can}_{n-k} - W^{n-k}$ which is a topological manifold of complex dimension $n-k$ and such that $\left(Rh_{k*}J^{can}_k\right)|_{X^{can}_{n-k}}$ is CLC. Let $V = V' \sqcup W^{n-k}$ and define $X^{can}_{n-k-1} \coloneqq X^{can}_{n-k} - V$. Notice that $X^{can}_{n-k-1}$ is closed in $X$.

\begin{lem}
$X^{can}_{n-1}$ is a union of strata for any stratification $\mathfrak{X}$.
\begin{proof}
Fix a stratification $\mathfrak{X}$. Recall that $X^{can}_1 = X - W^n$. Since $X$ is a union of strata, it suffices to show that $W^n$ is a union of strata. We claim that $W^n$ is a union of the strata $S_r$ which in the normal direction, look like $\CC^{n-r}$. If $x$ is contained in such a stratum, then $x$ has a neighborhood homeomorphic to $\CC^m$. Conversely, if $x$ has a neighborhood homeomorphic to $\CC^m$ and $S_r$ is the stratum containing $x$, then by possibly shrinking the neighborhood, we see that $S_r$ must look like $\CC^{n-r}$ in the normal direction.
\end{proof}
\end{lem}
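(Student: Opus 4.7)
Since $X^{can}_{n-1} = X - W^n$, it suffices to show that $W^n$ is a union of strata of the given stratification $\mathfrak X$. The set $W^n$ is open in $X$ (it is defined as the largest open topological $2n$-manifold inside $X$), and every stratum is connected by definition. Hence for each stratum $S$ the intersection $S\cap W^n$ is automatically open in $S$, and the goal reduces to showing that $S\cap W^n$ is also closed in $S$, which together with connectedness forces $S\cap W^n\in\{\varnothing,S\}$.

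The plan for closedness is a purely local translation argument inside a distinguished neighborhood. Fix a stratum $S$ of complex dimension $k$ and a point $p\in S$, and choose a distinguished neighborhood
\[
\phi\colon \RR^{2k}\times \text{cone}^o(L)\xrightarrow{\ \sim\ } N
\]
with $\phi(0,c)=p$, where $c$ denotes the cone point. By Remark \ref{projection rmk}, $\phi^{-1}(S\cap N)=\RR^{2k}\times\{c\}$, so every $q\in S\cap N$ satisfies $\phi^{-1}(q)=(y,c)$ for some $y\in\RR^{2k}$. Translation by $\pm y$ in the $\RR^{2k}$-factor is an ambient self-homeomorphism of $\RR^{2k}\times\text{cone}^o(L)$, and conjugating by $\phi$ yields a self-homeomorphism $\Phi\colon N\to N$ carrying $q$ to $p$. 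Consequently, the property of admitting an open neighborhood in $X$ homeomorphic to an open ball of $\CC^n$ is locally constant on $S\cap N$: any such chart at $q$, shrunk to lie inside $N$, is transported by $\Phi$ to such a chart at $p$. Therefore either $S\cap N\subseteq W^n$ or $(S\cap N)\cap W^n=\varnothing$, according as $p\in W^n$ or not.

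Combining these, if $p\in W^n$ then $S\cap N\subseteq W^n$, while if $p\notin W^n$ then $(S\cap N)\cap W^n=\varnothing$; in either case a neighborhood of $p$ in $S$ lies entirely on one side of $W^n$. Thus $S\cap W^n$ is both open and closed in the connected set $S$, hence equals $\varnothing$ or $S$. This proves that $W^n$, and therefore $X-W^n=X^{can}_{n-1}$, is a union of strata of $\mathfrak X$.

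\textbf{Main obstacle.} The only substantive step is the translation argument, which exploits the \emph{specific} product structure $\RR^{2k}\times\text{cone}^o(L)$ of a distinguished neighborhood rather than any intrinsic feature of $L$. Once one observes that translation in the Euclidean factor is an ambient self-homeomorphism of the model and that $S\cap N$ is exactly the slice $\RR^{2k}\times\{c\}$, the lemma reduces to a standard connectedness argument, and no further information about the link $L$ is needed—in particular, one does not need to identify explicitly the strata of $W^n$ as those whose link cone is homeomorphic to $\CC^{n-k}$, although this is the geometric content of the statement.
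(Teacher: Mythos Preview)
Your proof is correct and takes a genuinely different route from the paper. The paper identifies $W^n$ explicitly as the union of those strata $S_r$ whose normal cone $\text{cone}^o(L)$ is homeomorphic to $\CC^{n-r}$, arguing both inclusions directly; the ``conversely'' step (that a Euclidean neighborhood of $x$ forces the link cone to be Euclidean) is stated tersely and implicitly relies on an invariance-of-domain type argument after shrinking. Your argument bypasses this identification entirely: you never say which strata lie in $W^n$, only that membership in $W^n$ is locally constant along any stratum, via the translation self-homeomorphism of a distinguished neighborhood. This is more elementary and self-contained, and it makes transparent that the only input needed is the Euclidean factor in the local product structure, not any property of the link. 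The paper's approach, on the other hand, yields the extra information of exactly which strata constitute $W^n$, which is conceptually useful even if not logically required for the lemma. Your final paragraph already notes this trade-off accurately.
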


\begin{prop} 
For $0 \leq k \leq n$, we have
\begin{enumerate}
\item For any stratification $\mathfrak{X}$, $X^{can}_{n-k-1}$ is a union of strata,
\item dim$_\CC X^{can}_{n-k-1} \leq n-k-1$,
\item $Z^{can}_{n-k} = (X^{can}_{n-k} - X^{can}_{n-k-1})- W^{n-k}$ is either empty or a $n-k$ complex dimensional topological manifold.
\item Let $J^{can}$ be the object obtained by applying Deligne's construction with respect to the canonical filtration $\mathfrak{X}^{can}$ and a local system $\L$ on $\bigsqcup_{m=1}^n W^m$. Then $J^{can}|_{Z^{can}_{n-k}}$ is CLC. 
\end{enumerate}
\begin{proof}
We prove (1)-(4) by induction on $k$. If $k = 0$, then $X^{can}_{n-1}$ is a union of strata by the previous lemma. Moreover, $W^n$ contains all of the $n$-dimensional strata of $X$ by Lemma \ref{strat contain}, so dim$_\CC X^{can}_{n-1} \leq n-1$. This shows that $(1)$ and $(2)$ are satisfied. Since $Z^{can}_{n} = (X - X^{can}_{n-1}) - W^n = \varnothing$ is empty, $(3)$ and $(4)$ are also satisfied. 

Now fix $k > 0$ and suppose that (1)-(4) hold for all integers strictly less than $k$. Induction hypothesis (1) says that $X^{can}_{n-k}$ is a union of strata. If we can show that the set $V$ used to define $X^{can}_{n-k-1}$ is a union of strata which contains the $n-k$ complex dimensional strata of $X$, then (1) and (2) will hold for $k$. Property (3) will hold for $k$ since
$$Z_{n-k} = (X^{can}_{n-k} - X^{can}_{n-k-1})- W^{n-k} = V - W^{n-k} = V'$$ 
and $V'$ is a topological manifold of complex dimension $n-k$. Finally, since $Rh_{k*}J^{can}_k$ is CLC on $Z^{can}_{n-k}$, 
$$J^{can}|_{Z^{can}_{n-k}} = \tau_{\leq k-1-n}\left(Rh_{k*}h_k^*J^{can}\right)|_{Z^{can}_{n-k}} = \tau_{\leq k-1-n}\left(Rh_{k*}J^{can}_k\right)|_{Z^{can}_{n-k}}$$
is CLC. This implies that (4) will hold for $k$. Thus it suffices to show that $V$ is a union of strata which contains the $n-k$ complex dimensional strata of $X$. This is a consequence of the following lemma.
\end{proof}
\end{prop}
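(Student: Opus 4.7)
The plan is to complete the induction on $k$ already begun in the excerpt. The base case $k = 0$ is immediate: the preceding lemma gives (1), the inclusion $W^n \supseteq U^n$ from Lemma \ref{strat contain} gives (2), and $Z^{can}_n = \varnothing$ makes (3) and (4) vacuous. For the inductive step, the excerpt has reduced the problem to showing that $V = V' \sqcup W^{n-k}$ is a union of strata of the given stratification $\mathfrak{X}$ and contains every complex $(n-k)$-dimensional stratum of $X$ contained in $X^{can}_{n-k}$.

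First I would check that $W^{n-k}$ is a union of strata: it is open in $X$ and each stratum is connected, so every stratum either lies entirely in $W^{n-k}$ or is disjoint from it, while Lemma \ref{strat contain} ensures it contains every open complex $(n-k)$-dimensional stratum. Next, for any non-open complex $(n-k)$-dimensional stratum $S$ of $\mathfrak{X}$ contained in $X^{can}_{n-k}$, I would verify $S \subseteq V'$. Disjointness of $S$ from $W^{n-k}$ is clear because at $p \in S$ the distinguished neighborhood $N \simeq \CC^{n-k} \times \text{cone}^o(L)$ has nonempty link $L$, precluding a $\CC^{n-k}$ neighborhood in $X$. Combining the axiom of the frontier with the inductive bound $\dim_\CC X^{can}_{n-k} \leq n-k$, the only stratum of $X$ inside $X^{can}_{n-k}$ meeting $N$ is $S$ itself, so $N \cap X^{can}_{n-k} = N \cap S \simeq \CC^{n-k}$. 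This makes $S$ open in $X^{can}_{n-k} - W^{n-k}$ and a topological complex $(n-k)$-manifold; an analogous local analysis shows $V'$ cannot contain strata of lower complex dimension (they would destroy the manifold structure of dimension $n-k$), so $V'$ is exactly the union of the non-open complex $(n-k)$-dimensional strata inside $X^{can}_{n-k}$, hence a union of strata.

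The hardest part will be verifying the CLC condition defining $V'$, namely that $(Rh_{k*}J^{can}_k)|_{X^{can}_{n-k}}$ is CLC on every such $S$. I would handle this by an auxiliary induction on the construction stages $j = 1, \dots, k$, proving simultaneously that $J^{can}_j$ is $\mathfrak{X}$-cc on $U^{can}_j$: the open sets $U^{can}_j$ are unions of $\mathfrak{X}$-strata by the outer induction hypothesis combined with the stratum-decomposition of $W^m$ established above, so each open inclusion $U^{can}_j \hookrightarrow U^{can}_{j+1}$ is stratified, and the iterated truncated pushforwards together with direct sums of local systems extended by zero from $W^m$ preserve $\mathfrak{X}$-constructibility. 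It follows that $Rh_{k*}J^{can}_k$ is $\mathfrak{X}$-cc on $X$, hence CLC on every $\mathfrak{X}$-stratum and in particular on $S$, placing $S$ inside $V'$ and completing the reduction. The subtle point is that $\mathfrak{X}^{can}_k$ is a filtration rather than a topological stratification, so standard preservation results for pushforwards along stratified maps do not apply directly to it; the argument must be transferred to the refining topological stratification $\mathfrak{X}$, and the compatibility between the two is furnished precisely by the outer induction hypothesis.
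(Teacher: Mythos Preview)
Your overall strategy parallels the paper's, but there are two genuine gaps. First, the argument that $W^{n-k}$ is a union of strata is incorrect: openness of $W^{n-k}$ together with connectedness of strata does not force a stratum meeting $W^{n-k}$ to lie inside it (a connected set can meet an open set without being contained in it). The correct argument, as in the paper's lemma for $W^n$, uses the distinguished-neighborhood structure to show that membership in $W^{n-k}$ is determined by the normal cone, and then that this propagates along $S_r$ via the local product structure of Remark~\ref{projection rmk}. Second, your claim that $V'$ equals exactly the union $T$ of non-open $(n-k)$-dimensional strata is not justified: a point in a lower-dimensional stratum $S_r$ \emph{can} have a neighborhood in $X^{can}_{n-k} - W^{n-k}$ homeomorphic to $\CC^{n-k}$ (e.g., when the relevant piece of the link is a sphere of the right dimension), so such points are not automatically excluded from $V'$ on manifold grounds alone. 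What one actually needs is only that $V'$ is a union of strata, and this follows not from $V' = T$ but from showing that both defining conditions of $V'$ (the manifold condition and the CLC condition) are constant along each stratum --- again via the local product structure.

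For establishing the CLC condition itself, your route differs from the paper's. The paper proves directly, using distinguished neighborhoods and Lemma~\ref{projection commute}, that $(Rh_{k*}J^{can}_k)|_N$ is isomorphic to $\pi^*$ of a complex on the cone, whence local constancy along $S_r \cap N$ is immediate. You instead invoke the general fact that pushforward along a stratified open inclusion preserves $\mathfrak{X}$-constructibility, giving a clean inductive statement ($J^{can}_j$ is $\mathfrak{X}$-cc for all $j \le k$). This is a legitimate shortcut, but note that the explicit local-product computation is exactly what underlies that general preservation result, and it is also what you need to repair the two gaps above. So the paper's hands-on cone argument does double duty, whereas your approach still owes that computation (or an equivalent) to finish.
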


\begin{lem}
In the situation above, the complex $Rh_{k*}J^{can}_k|_{X^{can}_{n-k}}$ is $\mathfrak{X}$-clc for $k \geq 1$. 
\begin{proof}
Denote the stratification of $X$ 
$$\mathfrak{X} : X = X_n \supseteq X_{n-1} \supseteq \cdots \supseteq X_{-1} = \varnothing.$$
By the induction hypothesis, $X^{can}_{n-k}$ is a union of strata. We must show that $Rh_{k*}J^{can}_k|_{X^{can}_{n-k}}$ is CLC on each stratum. Let $x \in X^{can}_{n-k}$ and let $S_r \subseteq X_{2r} - X_{2r-1}$ be the stratum containing $x$. By definition of topologically stratified space, there exists a distinguished neighborhood $N$ and a $(2(n-r) - 1)$ real dimensional topologically stratified space $L$ such that $N \simeq \CC^r \times cone^o(L)$ and $N \cap X_{2r+l'+1} \simeq \CC^r  \times cone^o(L_{l'})$.  Let $V \coloneqq cone^o(L)$ and $\pi : \CC^r  \times V \to V$ be projection onto the second factor. For $l \leq k$, let 
$$\tilde{U}^{can}_l \coloneqq U^{can}_l \cap N,$$ 
and 
$$\hat{U}^{can}_l \coloneqq \pi(U^{can}_l).$$
Let $\tilde{j}_l: \tilde{U}^{can}_l \to \tilde{U}^{can}_{l+1}$ and $\hat{j}_l: \hat{U}^{can}_l \to \hat{U}^{can}_{l+1}$ denote inclusions. For $l \leq k$, the induction hypothesis (1) ensures that $U^{can}_l$ is a union of strata. Remark \ref{projection rmk} implies that $\pi^{-1}(\pi(\tilde{U}^{can}_l))$. It follows that
\begin{multline*}
(Rh_{k*}J^{can}_k)|_N \simeq R\tilde{h}_{k*} \bigg(\tau_{\leq k-1-n} R\tilde{j}_{k-1*}\cdots \tau_{\leq -n}R\tilde{j}_{1*}\pi^*\hat{\L}^n[n] \\
\oplus \cdots \oplus \tau_{\leq k -1 -n} R\tilde{j}_{k-1*}\pi^*\hat{\L}^{n-k-2}[n-k-2] \oplus \bigoplus_{m=1}^{n-k+1} \pi^*\hat{\L}^m[m]\bigg). 
\end{multline*}
By Lemma \ref{projection commute}, moving $\pi^*$ to the left changes tildes to hats. This gives
\begin{multline*}
(Rh_{k*}J^{can}_k)|_N \simeq \pi^*R\hat{h}_{k*} \bigg(\tau_{\leq k-1-n} R\hat{j}_{k-1*}\cdots \tau_{\leq -n}R\hat{j}_{1*}\hat{\L}^n[n] \\
\oplus \cdots \oplus \tau_{\leq k -1 -n} R\hat{j}_{k-1*}\hat{\L}^{n-k-2}[n-k-2] \oplus \bigoplus_{m=1}^{n-k+1} \hat{\L}^m[m]\bigg). 
\end{multline*}
Since $V_{0}$ is a point, the complex $(Rh_{k*}J^{can}_k)|_{\pi^{-1}(V_{0})}$ is CLC. 
\end{proof}
\end{lem}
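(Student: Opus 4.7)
The plan is to verify local constancy of the cohomology sheaves of $Rh_{k*}J^{can}_k|_{X^{can}_{n-k}}$ along each $\mathfrak{X}$-stratum contained in $X^{can}_{n-k}$ by reducing to a product model via a distinguished neighborhood. First I would observe, using the induction hypothesis (1) of the ambient proposition together with the fact that each $X^m=\overline{W^m}$ is a union of strata (same argument as in the preceding lemma, since $W^m$ is exactly the union of those strata whose normal slice looks like $\CC^m$), that each $X^{can}_{n-l}$ for $l\leq k$ is $\mathfrak{X}$-closed and a union of strata; hence each $U^{can}_l$ for $l\leq k$ is an open union of $\mathfrak{X}$-strata.

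Next, fix a stratum $S_r\subseteq X_{2r}-X_{2r-1}$ contained in $X^{can}_{n-k}$ and a point $x\in S_r$, and choose a distinguished neighborhood $\phi\colon \CC^r\times cone^o(L)\xrightarrow{\sim} N$ around $x$ compatible with $\mathfrak{X}$. Set $V=cone^o(L)$ and let $\pi\colon N\to V$ denote the projection. Since each $U^{can}_l$ is a union of strata, Remark \ref{projection rmk} gives $\pi^{-1}(\pi(U^{can}_l\cap N))=U^{can}_l\cap N$; writing $\tilde U^{can}_l=U^{can}_l\cap N$ and $\hat U^{can}_l=\pi(\tilde U^{can}_l)$, this reads $\tilde U^{can}_l=\pi^{-1}(\hat U^{can}_l)$, so we have a cartesian square to which Lemma \ref{projection commute}(a) applies. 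On each piece $\tilde W^m\subseteq N$, the same product structure identifies $\L^m|_{\tilde W^m}\simeq \pi^*\hat\L^m$ for a local system $\hat\L^m$ on $\hat W^m$ (trivializing $\L^m$ along the contractible $\CC^r$ direction).

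Now I would unwind Deligne's construction: $J^{can}_k$ is a direct sum, indexed by $m$, of iterated compositions of open pushforwards $Rj_{l*}$ and truncation functors $\tau_{\leq\bullet}$ applied to $\L^m[m]$. Applying Lemma \ref{projection commute}(a) repeatedly to the cartesian squares above commutes $\pi^*$ past every $R\tilde j_{l*}$ and past the outermost $R\tilde h_{k*}$, while truncation commutes trivially with $\pi^*$. The result is the isomorphism displayed in the excerpt,
\[
(Rh_{k*}J^{can}_k)|_N\simeq \pi^*\Bigl(R\hat h_{k*}\bigl(\tau_{\leq k-1-n}R\hat j_{k-1*}\cdots\tau_{\leq -n}R\hat j_{1*}\hat\L^n[n]\oplus\cdots\oplus\bigoplus_{m=1}^{n-k+1}\hat\L^m[m]\bigr)\Bigr).
\]

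Finally, the stratum $S_r\cap N$ corresponds under $\phi$ to $\CC^r\times\{v_0\}$, where $v_0$ is the cone vertex, so $\pi$ collapses $S_r\cap N$ to the single point $v_0\in V$. Hence the restriction of any complex of the form $\pi^*T$ to $S_r\cap N$ is the constant-along-$\CC^r$ complex with stalk $T_{v_0}$, whose cohomology sheaves are manifestly locally constant; this gives CLC in a neighborhood of $x$ in $S_r$. Since $x$ and $S_r$ were arbitrary, the restriction $Rh_{k*}J^{can}_k|_{X^{can}_{n-k}}$ is $\mathfrak{X}$-clc. The main obstacle I anticipate is the bookkeeping needed to justify that every intermediate open set $\tilde U^{can}_l$ is indeed $\pi^{-1}(\hat U^{can}_l)$ (so that Lemma \ref{projection commute}(a) applies at each commutation step) and that the local systems $\L^m$ really descend to $\hat W^m$; both reduce to the fact, secured in the opening paragraph, that each $U^{can}_l$ and $W^m$ is a union of $\mathfrak{X}$-strata.
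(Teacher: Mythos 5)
Your proposal is correct and follows essentially the same route as the paper: reduce to a distinguished neighborhood $N\simeq\CC^r\times cone^o(L)$, use that each $U^{can}_l$ is a union of strata so $\tilde U^{can}_l=\pi^{-1}(\hat U^{can}_l)$, commute $\pi^*$ past the pushforwards via Lemma \ref{projection commute} (truncations commuting since $\pi^*$ is exact), and conclude CLC because the restriction over the cone point is pulled back from a point. Your explicit remarks on descending the local systems $\L^m$ to $\hat W^m$ and on the cartesian squares simply spell out steps the paper leaves implicit.
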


\begin{prop}
Let $J^{can}$ be the complex obtained from Deligne's construction with respect to the canonical filtration $\mathfrak{X}^{can}$ and some local system $\L$ on $\bigsqcup_{m=1}^n W^m$. Then $J^{can}$ satisfies [AX2$'$]. 
\begin{proof}
$J^{can}$ satisfies [AX2$'$](a) by construction. To verify [AX2$'$](b), fix $1 \leq m \leq n$ and $a > -m$. We want to show that dim$_\CC\{x \in X^m \ | \ \H^a(i_x^*J^{can}) \neq 0 \} < -a$. First, notice that $W^l \cap X^m$ is nonempty if and only if $l = m$. Since $J^{can}|_{W^m} \simeq \L^m[m]$ and $a > -m$, the set $W^l \cap \{x \in X^m \ | \ \H^a(i_x^*J^{can}) \neq 0 \}$ is empty for all $1 \leq l \leq n$. Thus, it suffices to consider the intersection 
$$Z^{can}_{n-k} \cap \{x \in X^m \ | \ \H^a(i_x^*J^{can}) \neq 0 \}.$$
Since $J^{can}|_{Z^{can}_{n-k}} \simeq \tau_{\leq k - 1 -n} J^{can}|_{Z^{can}_{n-k}}$, this intersection is possibly nonempty if and only if $a \leq k - 1 - n$.  Since dim$_\CC Z^{can}_{n-k} \leq n-k$, it follows that 
$$\text{dim}_\CC \left(Z^{can}_{n-k} \cap  \{x \in X^m \ | \ \H^a(i_x^*J^{can}) \neq 0 \}\right) \leq n-k < -a.$$ 
Since this is true for any $k \geq 1$, we conclude that dim$_\CC\{x \in X^m \ | \ \H^a(i_x^*J^{can}) \neq 0 \} < -a$. 

To verify [AX2$'$](c), fix $1 \leq m \leq n$ and $a < m$. A similar argument to the above shows that $W^l \cap \{x \in X^m \ | \ \H^a(i_x^!J^{can}) \neq 0 \}$ is empty for all $1 \leq l \leq n$. Again, it suffices to consider $Z^{can}_{n-k} \cap \{x \in X^m \ | \ \H^a(i_x^!J^{can}) \neq 0 \}$. Notice that by Proposition \ref{non open strat lem}, $Z^{can}_{n-k} = U^{can}_{k+1} - U^{can}_k$. The inclusions $U^{can}_{k} \xrightarrow{j} U^{can}_{k+1} \xleftarrow{i} U^{can}_{k+1}-U^{can}_k$ give rise to the adjunction triangle
$$i_!i^!J^{can}|_{U^{can}_{k+1}} \to J^{can}|_{U^{can}_{k+1}} \to Rj_*j^* J^{can}|_{U^{can}_{k+1}} \xrightarrow{[1]}.$$
Restriction to $U^{can}_{k+1}-U^{can}_k$ gives 
$$i^!J^{can}|_{U^{can}_{k+1}} \to i^*J^{can}|_{U^{can}_{k+1}} \to i^*Rj_*j^* J^{can}|_{U^{can}_{k+1}} \xrightarrow{[1]}.$$
Since $i^*J^{can}|_{U^{can}_{k+1}} \simeq \tau_{\leq k - 1 -n} i^*Rj_*j^*J^{can}|_{U^{can}_{k+1}}$ by construction, the long exact sequence in cohomology implies that $\H^a(i^!J^{can}|_{U^{can}_{k+1}}) = 0$ for $a \leq k-n$. Factor the inclusion $i_x : \{x\} \to X$ into 
\begin{center}
\begin{tikzcd}
\{x\} \arrow[r,"i_x"] \arrow[d, "\mu_x"] & X\\
U^{can}_{k+1} - U^{can}_k \arrow[r,"i"] & U^{can}_{k+1} \arrow[u,"\beta"]
\end{tikzcd}
\end{center}
Since $U^{can}_{k+1} - U^{can}_k$ is a topological manifold of dimension $2(n-k)$, Proposition \ref{top man} implies that 
$$i_x^!J^{can} = \mu_x^!i^!J^{can}|_{U^{can}_{k+1}} = \mu_x^*i^!J^{can}|_{U^{can}_{k+1}}[-2(n-k)],$$
where the first equality holds since $U^{can}_{k+1}$ is open in $X$. It follows that $\H^a(i_x^!J^{can}) = 0$ for $a \leq n-k$. So $(U^{can}_{k+1}-U^{can}_k ) \cap  \{x \in X^m \ | \ \H^a(i_x^!J^{can}) \neq 0 \}$ is possibly nonempty if and only if $a > n-k$. We conclude that 
$$\text{dim}_\CC\left(Z^{can}_{n-k} \cap  \{x \in X^m \ | \ \H^a(i_x^!J^{can}) \neq 0 \} \right)\leq n-k < a.$$
Since this is true for any $k \geq 1$, we conclude that dim$\{x \in X^m \ | \ \H^a(i_x^!J^{can}) \neq 0 \} < a$. 
\end{proof}
\end{prop}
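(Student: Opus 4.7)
The plan is to verify the three axioms of [AX2$'$] in turn, with $V = \bigsqcup_{m=1}^n W^m$ as the candidate open dense subset. Axiom (a) should be essentially built into the construction: $J^{can}|_{W^m} \simeq \L^m[m]$ by Deligne's construction, $\overline{W^m} = X^m$ by the lemma proved earlier in this section, and $W^m$ is open and dense in $X^m$, which forces $\dim_\CC(X^m - W^m) \leq m-1$. The key preliminary observation for (b) and (c) is that $\overline{W^m} \cap W^l = \varnothing$ whenever $l \neq m$: a point in the intersection would sit in an open neighborhood homeomorphic to $\CC^l$ that meets $W^m$, yielding a point with neighborhoods homeomorphic both to $\CC^m$ and to an open subset of $\CC^l$, which contradicts invariance of domain. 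Consequently, when bounding stalks or costalks of $J^{can}$ over $X^m = \overline{W^m}$, the only pieces that can contribute are $W^m$ itself together with the non-open manifold strata $Z^{can}_{n-k}$ for $k \geq 1$.

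For axiom (b), fix $1 \leq m \leq n$ and $a > -m$. The stalk on $W^m$ vanishes in degree $a$ since $\L^m[m]$ is concentrated in degree $-m$. By the recursion defining $J^{can}$, the restriction $J^{can}|_{Z^{can}_{n-k}}$ coincides with $i^* \tau_{\leq k-1-n} Rj_{k*} J^{can}_k$ (the direct summands $\L^l[l]$ appearing at the $k$-th step of Deligne's construction are supported off $Z^{can}_{n-k}$), so its cohomology sheaves vanish above degree $k-1-n$. A nonvanishing stalk at degree $a$ therefore forces $k \geq a+n+1$, and
\[
\dim_\CC\bigl(Z^{can}_{n-k} \cap \{x : \H^a(i_x^* J^{can}) \neq 0\}\bigr) \leq n-k \leq -a-1 < -a.
\]
Taking the union over $k$ produces the required dimension bound.

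For axiom (c), fix $1 \leq m \leq n$ and $a < m$. On $W^m$, openness of $W^m \hookrightarrow X$ together with Proposition \ref{top man} gives $i_x^! J^{can} \simeq \L^m_x[-m]$, concentrated in degree $m$, so $a < m$ eliminates this contribution. For each $Z^{can}_{n-k}$ I would invoke the adjunction triangle attached to the open/closed decomposition $U^{can}_k \hookrightarrow U^{can}_{k+1} \hookleftarrow Z^{can}_{n-k}$. Because $J^{can}|_{U^{can}_{k+1}}$ is (up to summands supported off $Z^{can}_{n-k}$) the $\tau_{\leq k-1-n}$-truncation of $Rj_{k*} J^{can}_k$, the canonical map $i^* J^{can}|_{U^{can}_{k+1}} \to i^* Rj_{k*} j_k^* J^{can}|_{U^{can}_{k+1}}$ is an isomorphism on cohomology sheaves in degrees $\leq k-1-n$; the long exact sequence of the adjunction triangle then forces $\H^a(i^! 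J^{can}|_{U^{can}_{k+1}}) = 0$ for $a \leq k-n$. Since $Z^{can}_{n-k}$ is a topological manifold of real dimension $2(n-k)$, Proposition \ref{top man} translates this to $\H^a(i_x^! J^{can}) = 0$ for $a \leq n-k$, so a nonvanishing costalk at degree $a$ forces $n-k < a$, and hence $\dim_\CC Z^{can}_{n-k} \leq n-k < a$. I expect axiom (c) to be the main obstacle: three separate ingredients (the adjunction triangle, the truncation built into Deligne's step, and the manifold duality shift $\mu_x^! \simeq \mu_x^*[-2(n-k)]$ coming from Proposition \ref{top man}) must be combined correctly, and the degree bookkeeping must align precisely so that the two $(n-k)$'s cancel and give the strict inequality $< a$.
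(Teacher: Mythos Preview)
Your proposal is correct and follows essentially the same route as the paper: handle the $W^l$-pieces via the invariance-of-domain observation $W^l \cap X^m = \varnothing$ for $l \neq m$, bound stalks on each $Z^{can}_{n-k}$ via the truncation built into Deligne's step, and bound costalks by combining the adjunction triangle with the attaching property and the manifold shift $\mu_x^! \simeq \mu_x^*[-2(n-k)]$ from Proposition~\ref{top man}. One small caution: openness and density of $W^m$ in $X^m$ do not by themselves force $\dim_\CC(X^m - W^m) \leq m-1$; you should instead invoke $U^m \subseteq W^m$ (Lemma~\ref{strat contain}) together with $\dim_\CC(X^m - U^m) \leq m-1$, which the paper establishes via the axiom of the frontier.
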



\begin{thebibliography}{0}
\bibitem{B}
Borel, A., et al. \textit{Intersection Cohomology}. Mod. Birkh\"auser Class. Birkh\"auser Boston, Inc., Boston, MA, 2008.

\bibitem{dC1}
de Cataldo, M. ``The Perverse Filtration and the Lefschetz Hyperplane Theorem, II." \textit{J. Algebraic Geom.} 21 (2012): 305--345.  

\bibitem{dCM}
de Cataldo M. and D. Maulik. ``The Perverse Filtration for the Hitchin Fibration is Locally Constant. preprint, arXiv:1808.02235. 

\bibitem{dCL}
de Cataldo, M. and L. Migliorini. ``The Decomposition Theorem and the topology of algebraic maps." \textit{Bull. Amer. Math. Soc.} 46, no.4, (2009): 535--633. 

\bibitem{F}
Friedman, G. \textit{Singular Intersection Homology}. 

\bibitem{GM1} 
Goresky, M. and R. MacPherson. ``Intersection Homology Theory." \textit{Topology} 19, (1980): 135--162.

\bibitem{GM2} 
Goresky, M. and R. MacPherson. ``Intersection Homology II." \textit{Invent. Math.}, 71, (1983): 77--129.

\bibitem{HW} 
Hurewicz, W. and Wallman, H. \textit{Dimension Theory}. Princeton Univ. Press, Princeton, NJ 1941.

\bibitem{I}
Iversen, B. \textit{Cohomology of Sheaves}. Universitext, Springer-Verlag, Berlin, 1986. 	

\bibitem{KS}
Kashiwara, M. and P. Schapira. \textit{Sheaves on Manifolds}, Grundlehren Math. Wiss., 292. Springer-Verlag, Berlin, 1990.

\bibitem{M}
Mather, J. ``Notes on Topological Stability." \textit{Bull. Amer. Math. Soc.} 49, no. 4 (2012): 475--506.

\bibitem{S}
Siebenmann, L. ``Deformations of Homeomorphisms on Stratified Sets." \textit{Comment. Math. Helv.} 47, (1972): 123--163.

\bibitem{T}
Thom, R. ``Ensembles et Morphismes Stratifi\'es." \textit{Bull. Amer. Math. Soc.} 75, no. 2 (1969): 240--284.

\bibitem{Te}
Teissier, B. ``Vari\'et\'es polaires II : Mulitplicit\'es polaires, sections planes et conditions de Whitney." \textit{Algebr. Geom.} 961, (1981): 314--491. 

\bibitem{V}
Verdier, J.L. ``Stratification de Whitney et th\'eor\`eme de Bertini-Sard." \textit{Invent. Math.} 36, (1976): 295-312. 

\bibitem{W}
Whitney, H. ``Tangents to an Analytic Variety." \textit{Ann. of Math.(2)} 81, no. 3 (1965): 496--549 

\end{thebibliography}
\end{document}